\documentclass[12pt]{amsart}
\usepackage[colorlinks=true,pagebackref,hyperindex,citecolor=black,linkcolor=black]{hyperref}
\usepackage[top=1in, bottom=1in, left=1.	1in, right=1.1in]{geometry}
\usepackage{amsmath}
\usepackage{amsfonts}
\usepackage{amssymb}
\usepackage{amsthm}
\usepackage{stmaryrd}
\usepackage[all]{xy}  
\usepackage{mathrsfs}
\usepackage{enumitem}
\usepackage[T1]{fontenc}

\usepackage{color}

\makeatletter
\def\namedlabel#1#2{\begingroup
#2%
\def\@currentlabel{#2}%
\phantomsection\label{#1}\endgroup
}
\makeatother

%\begin{enumerate}[noitemsep]

\newtheorem{theoremx}{Theorem}
 % "letter-numbered" theorems for Introduction 
% THEOREM Environments ---------------------------------------------------
\theoremstyle{theorem}
\newtheorem{theorem}{Theorem}[section]

\newtheorem{corollary}[theorem]{Corollary}
\newtheorem{lemma}[theorem]{Lemma}
\newtheorem{proposition}[theorem]{Proposition}

\theoremstyle{definition}
\newtheorem{definition}[theorem]{Definition}
\newtheorem{question}[theorem]{Question}
\newtheorem{example}[theorem]{Example}
\newtheorem{remark}[theorem]{Remark}
\numberwithin{equation}{subsection}

 % "letter-numbered" questions for Introduction

% MATH -------------------------------------------------------------------

% For sets

\newcommand{\NN}{\mathbb{N}}
\newcommand{\RR}{\mathbb{R}}
\newcommand{\ZZ}{\mathbb{Z}}
\newcommand{\QQ}{\mathbb{Q}}

\newcommand{\m}{\mathfrak{m}}
\newcommand{\n}{\mathfrak{n}}
%%%%%%%%%%%%%%%%%%%%%

% For fancy letter

\newcommand{\cA}{\mathscr{A}}
\newcommand{\cB}{\mathscr{B}}

\newcommand{\cR}{\mathscr{R}}

%%%%%%%%%%%%%%%%%%%%%

% For operators using abbreviations
\newcommand{\Spec}{\operatorname{Spec}}
\newcommand{\Hom}{\operatorname{Hom}}

\newcommand{\reg}{\operatorname{reg}}	
\newcommand{\fpt}{\operatorname{fpt}}	
\newcommand{\gr}{\operatorname{gr}}	
\newcommand{\e}{\operatorname{e}}	
             
%%%%%%%%%%%%%%%%%%%%%

% Maps

%Connecting map in LES

%%%%%%%%%%%%%%%%%%%%%

%Alessandro's commands
%

\newcommand{\ds}{\displaystyle}

\renewcommand{\a}{\mathfrak{a}}

% For a generic number of items

\usepackage{soul}

%%%%%%%%%%%%%%%%%%%%%
%%%%%%%%%%%%%%%%%%%%%

% EDITING  -------------------------------------------------------------------

% For author editorial notes
%\newcommand{\note}[2]{}

% Shortcuts for colors

% For segues
\definecolor{blue-violet}{rgb}{0.54, 0.17, 0.89}
\definecolor{Blue}{rgb}{0.01, 0.28, 1.0}
\definecolor{gGreen}{rgb}{0.2, 0.8, 0.2}
\definecolor{Green}{rgb}{0.04, 0.85, 0.32}

% For red comments on typos

%-------------------------------------------------------
\begin{document}

\title{On the existence of $F$-thresholds and related limits}
\dedicatory{Dedicated to Professor~Craig~Huneke on the occasion of his sixty-fifth birthday.}
\author{Alessandro De Stefani}
\address{Department of Mathematics, Royal Institute of Technology (KTH), 100 44 Stockholm, Sweden}
\email{ads@kth.se}
\author{Luis N\'u\~nez-Betancourt$^*$}
\address{Centro de Investigaci\'on en Matem\'aticas, Guanajuato, GTO, M\'exico}  \email{luisnub@cimat.mx}
\thanks{$^*$ The second author was partially supported by NSF Grant 1502282.}
\author{Felipe P\'erez}
\address{Department of Mathematics \& Statistics, Georgia State University, Atlanta, GA 30303, USA}  
\email{jperezvallejo@gsu.edu}
\subjclass[2010]{Primary 13A35; Secondary 13D45, 14B05.}
\keywords{$F$-thresholds, $a$-invariants, $F$-pure thresholds, Castelnuovo-Mumford regularity; strong $F$-regularity; $F$-signature.}

\maketitle

\begin{abstract}

We show the existence of $F$-thresholds in full generality.  In addition, we study properties of standard graded algebras over a field for which $F$-pure threshold and $F$-threshold at the irrelevant maximal ideal agree. We also exhibit explicit bounds for the $a$-invariants and Castelnuovo-Mumford regularity of Frobenius powers of ideals in terms of $F$-thresholds and $F$-pure thresholds, obtaining the existence of related limits in certain cases.

\end{abstract}

%%%%%%%%%%%%%
\section{Introduction}
%%%%%%%%%%%%%

In recent years there has been an intense research in algebraic geometry and commutative algebra, carried towards a better understanding of what are nowadays known as $F$-singularities. Particular attention has been given to $F$-pure \cite{HRFpurity,FedderFpurityFsing} and $F$-regular singularities \cite{HoHu1,HoHu2,HoHu3,FW}. Attached to these singularity types, there are numerical invariants that measure how good or bad a singular point is; for instance, the $F$-thresholds \cite{MTW,HMTW}, the $F$-pure thresholds \cite{TW2004}, and the $F$-signature \cite{SmithVDB,HLMCM,TuckerFSig}.
In this manuscript, we study these numbers, compare them, and obtain consequences regarding the singularities of the ring.

The $F$-thresholds were first introduced for regular rings by Musta{\c{t}}{\u{a}}, Takagi, and Watanabe \cite{MTW}, as a positive characteristic analogue of log-canonical thresholds. In a subsequent joint work with Huneke \cite{HMTW}, $F$-thresholds were defined in general rings of prime characteristic as limits of normalized Frobenius orders,  provided they exist. In the same article, the authors showed compelling relations that $F$-thresholds have with the Hilbert-Samuel multiplicity, tight closure and integral closure. However, a drawback of using these methods was that the convergence of the sequence defining the $F$-thresholds had been shown only in some partial cases \cite{HMTW,HTW,LiSocles}. We settle this problem by proving the existence of $F$-thresholds in full generality.

\begin{theoremx}[{see Theorem \ref{Existence c}}] Let $R$ be a Noetherian ring of prime characteristic $p$. Let $\a, J\subseteq R$ be ideals such that $\a \subseteq \sqrt{J}$. If $\nu_\a^J(p^e) := \max\{t \in \NN \mid \a^t \not\subseteq J^{[p^e]} \}$, then the $F$-threshold $c^J(\a) = \lim\limits_{e\to \infty} \frac{ \nu^J_\a(p^{e})}{p^e}$ exists.
\end{theoremx}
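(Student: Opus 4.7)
The plan is to show that the sequence $\nu^J_\a(p^e)/p^e$ is bounded above and satisfies an estimate of the form
$$
\nu^J_\a(p^{e+1}) \leq p \cdot \nu^J_\a(p^e) + C,
$$
for a constant $C$ independent of $e$. Iterating and dividing by $p^{e+e'}$ produces an error term controlled by a geometric series, which in turn will force $\limsup \leq \liminf$ and hence convergence.

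First I would fix a set of generators $a_1, \ldots, a_d$ of $\a$ and pick $N \in \NN$ with $\a^N \subseteq J$, which exists because $\a \subseteq \sqrt{J}$ and $R$ is Noetherian. A pigeonhole argument on monomial generators then shows $\a^{Ndp^e} \subseteq J^{[p^e]}$: any monomial $a_1^{n_1} \cdots a_d^{n_d}$ with $\sum n_i = Ndp^e$ has some index $j$ with $n_j \geq Np^e$, and therefore contains the factor $(a_j^N)^{p^e} \in J^{[p^e]}$. In particular $\nu^J_\a(p^e)/p^e < Nd$ for every $e$, so the sequence is bounded.

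The core of the argument is the one-step comparison. Setting $s = \nu^J_\a(p^e) + 1$, we have $\a^s \subseteq J^{[p^e]}$ and hence $(\a^s)^{[p]} \subseteq J^{[p^{e+1}]}$. A second pigeonhole argument shows $\a^{sp + d(p-1)} \subseteq (\a^s)^{[p]}$: for any monomial $a_1^{n_1} \cdots a_d^{n_d}$ with $\sum n_i = sp + d(p-1)$, writing $n_i = p n'_i + r_i$ with $0 \leq r_i < p$ forces $\sum n'_i \geq s$, so the monomial decomposes as $(a_1^{n'_1} \cdots a_d^{n'_d})^p \cdot a_1^{r_1} \cdots a_d^{r_d}$, with the first factor a $p$-th power of a generator of $\a^s$. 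Combining the two containments yields $\nu^J_\a(p^{e+1}) \leq p\, \nu^J_\a(p^e) + (d+1)(p-1)$, the desired one-step estimate.

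Iterating this inequality and summing the resulting geometric series yields
$$
\frac{\nu^J_\a(p^{e+e'})}{p^{e+e'}} \leq \frac{\nu^J_\a(p^e)}{p^e} + \frac{d+1}{p^e}
$$
for all $e, e' \geq 0$. Fixing $e$ and sending $e' \to \infty$ gives $\limsup_{f} \nu^J_\a(p^f)/p^f \leq \nu^J_\a(p^e)/p^e + (d+1)/p^e$; sending $e \to \infty$ then forces $\limsup \leq \liminf$, so the limit exists. The main obstacle is the one-step comparison itself: because $R$ is only assumed Noetherian of characteristic $p$, one cannot invoke flatness of Frobenius, tight-closure machinery, or any regularity-type hypothesis to relate $\a^s \subseteq J^{[p^e]}$ with containments in $J^{[p^{e+1}]}$. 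The argument must therefore rest on the purely formal observation that $I \subseteq K$ implies $I^{[p]} \subseteq K^{[p]}$, combined with the monomial combinatorics linking ordinary and Frobenius powers of $\a$.
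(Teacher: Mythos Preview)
Your proof is correct and follows essentially the same approach as the paper: a pigeonhole argument on monomial generators of $\a$ to compare ordinary powers with Frobenius powers, yielding an inequality of the form $\nu^J_\a(p^{e+e'})/p^{e+e'} \leq \nu^J_\a(p^e)/p^e + C/p^e$, followed by the same $\limsup \leq \liminf$ conclusion. The only cosmetic differences are that the paper proves the multi-step inequality directly (its Lemma~3.2 gives $\a^r = \a^{r-sp^e}(\a^{[p^e]})^s$ for $r \geq (\mu(\a)+s-1)p^e$, from which the bound with constant $C = \mu(\a)$ follows in one stroke) rather than iterating a one-step estimate, and that your explicit boundedness step is subsumed in the paper by taking $e_1=0$ in the main inequality.
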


In the rest of this article, we study relations between $F$-thresholds and other invariants in prime characteristic. Namely, $F$-pure thresholds,  $a$-invariants, and asymptotic Castelnuovo-Mumford regularity of Frobenius powers.

The $F$-pure threshold of an ideal $\a \subseteq R$, denoted $\fpt(\a)$, was defined by Takagi and Watanabe \cite{TW2004}. Roughly speaking, the $F$-pure threshold of an ideal measures its splitting order. The general expectation is that, the higher the $F$-pure threshold, the better the singularities \cite{H-Y,BFS}. Takagi and Watanabe \cite{TW2004} write: {\it ``Although our first motivation was to investigate the log canonical threshold via the F-pure threshold, we find that the F-pure threshold itself is an interesting invariant in commutative algebra''}. 

If $(R,\m,K)$ is regular, either local or standard graded, then $\fpt(\a) = c^\m(\a)$ for any ideal $\a \subseteq R$. In contrast,  this is often not the case for singular rings. However, the inequality $\fpt(\a) \leq c^\m(\a)$ holds true in general. In this article, we focus on the study of the following question, asked in different settings by several researchers.

\begin{question}[{\cite{Hirose,DiagF-thresholdsBinHyp,HWY}}]\label{Question Equality}
What are necessary and sufficient conditions for the equality $\fpt(\m)=c^\m(\m)$ to hold? 
\end{question}

Our first step towards an answer to Question \ref{Question Equality} is a characterization of the $F$-pure threshold as a limit of $F$-thresholds (see Theorem \ref{Main 1} and Corollary \ref{Cor fpt=c}). If we restrict ourselves to standard graded Gorenstein $K$-algebras, we are able to partially answer Question \ref{Question Equality}, giving a necessary condition for the equality to hold. In a sense, this result says that standard graded Gorenstein rings such that $\fpt(\m) = c^\m(\m)$ have the best possible type of $F$-singularities.

\begin{theoremx} (see Theorem \ref{main thm})\label{Main GorGraded2}
Let $(R,\m,K)$ be a $d$-dimensional standard graded Gorenstein $K$-algebra that is $F$-finite and $F$-pure. If $\fpt(\m)=c^\m(\m),$ then $R$ is strongly $F$-regular.
Furthermore,\[
s(R) \geq \frac{e(R)}{d!},
\]
where $e(R)$ denotes the Hilbert-Samuel multiplicity and $s(R)$ the $F$-signature of $R$.
\end{theoremx}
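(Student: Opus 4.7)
My plan is to prove the quantitative lower bound $s(R) \geq e(R)/d!$; the strong $F$-regularity then follows, since $s(R) > 0$ characterizes strong $F$-regularity. The Gorenstein and $F$-purity hypotheses provide the structural flexibility, while $\fpt(\m) = c^\m(\m)$ is what supplies the quantitative match.

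First, I would use the Gorenstein hypothesis to identify $\Hom_R(F^e_*R, R) \cong F^e_*R$ as graded $F^e_*R$-modules, with a degree shift dictated by $a(R)$. Since $R$ is $F$-pure and standard graded, $a(R) \leq 0$. Under this identification, Frobenius splittings at level $e$ correspond to elements $u \in R$ in degree $-a(R)(p^e-1)$ that are non-vanishing under a fixed ``trace'' map to $R_0 = K$.

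Second, I would combine this with the characterization of $\fpt(\m)$ as a limit of $F$-thresholds provided by Corollary~\ref{Cor fpt=c} to show that, under the equality $\fpt(\m) = c^\m(\m)$, the ``non-splitting'' ideal
\[
I_e = \bigcap_{\phi \in \Hom_R(F^e_*R, R)} \phi^{-1}(\m)
\]
satisfies $I_e \subseteq \m^{p^e}$ for all $e$ sufficiently large. The mechanism, roughly, is that since the $F$-threshold and the $F$-pure threshold sequences have the same asymptotic rate, the supply of splitting-producing elements (from the Gorenstein Fedder-type formula) is large enough to exhibit, for every $r \notin \m^{p^e}$, a splitting $\phi$ with $\phi(F^e_*r) \notin \m$. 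Using the standard identification $a_e = \lambda(R/I_e)$ for the free rank of $F^e_*R$ as an $R$-module, the containment then yields
\[
a_e \;\geq\; \lambda(R/\m^{p^e}) \;=\; \frac{e(R)}{d!}\,p^{ed} + o(p^{ed}),
\]
by the Hilbert--Samuel asymptotic. Dividing by $p^{ed}$ and letting $e \to \infty$ gives $s(R) \geq e(R)/d! > 0$.

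The main difficulty lies in establishing the containment $I_e \subseteq \m^{p^e}$: this is where all three hypotheses (Gorenstein, $F$-pure, and $\fpt(\m) = c^\m(\m)$) play an essential role, through Gorenstein self-duality, the Fedder-type formula for splittings in the graded setting, and the numerical match between the two threshold sequences. Once this containment is in hand, the rest of the proof is bookkeeping with the Hilbert function of $R$.
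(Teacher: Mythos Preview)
Your approach is genuinely different from the paper's, and the central step you outline---the containment $I_e \subseteq \m^{p^e}$ for $e \gg 0$---is neither proved in your sketch nor obviously true. The ``mechanism'' you describe is too vague to be an argument: the equality $\fpt(\m)=c^\m(\m)$, via Corollary~\ref{Cor fpt=c}, only yields $c^{I_e}(\m)=c^{\m^{[p^e]}}(\m)$, which is an equality of $F$-thresholds, not a containment of ideals. Equality of $F$-thresholds with respect to $\m$ says nothing about individual low-degree elements of $I_e$; two $\m$-primary ideals can have the same $F$-threshold without any containment between them. Concretely, to show $[I_e]_j=0$ for every $j<p^e$ you would need to produce, for each homogeneous $r$ of degree $<p^e$, an element $g$ of degree $-a_d(R)(p^e-1)-\deg(r)$ with $\Phi_e(F^e_*(gr))\neq 0$. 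Nothing in your outline explains why such $g$ always exists; the Gorenstein trace map gives a single linear functional on $[R]_{-a_d(R)(p^e-1)}$, and you would have to show that $r\cdot[R]_{-a_d(R)(p^e-1)-\deg r}$ is never contained in its kernel. Note also that $I_e\subseteq \m^{p^e}$ for all large $e$ is \emph{strictly stronger} than strong $F$-regularity (it gives a uniform degree bound rather than an eventual one), so you are tacitly claiming more than the theorem.

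The paper proceeds quite differently. After reducing to infinite $K$ and proving that $R$ is a domain, it chooses a homogeneous parameter ideal $J$ with $\overline{J}=\m$, sets $\a=(J:\m)$, and uses Corollary~\ref{Cor fpt=c formula} to obtain $c^\a(J)\leq c^J(\m)-1\leq d-1$. Strong $F$-regularity is then deduced from the Huneke--Musta\c{t}\u{a}--Takagi--Watanabe characterization of tight closure of parameter ideals via $F$-thresholds, together with the Gorenstein hypothesis. For the $F$-signature bound, the inequality $c^\a(J)\leq d-1$ gives $J^{p^e(d-1)+1}\subseteq \a^{[p^e]}$, and one counts $\lambda\bigl((J^{p^e(d-1)+1}+J^{[p^e]})/J^{[p^e]}\bigr)=\lambda(R/J)\binom{p^e-1}{d}$; combined with the Huneke--Leuschke formula $s(R)=\e_{HK}(J)-\e_{HK}(\a)$ this yields $s(R)\geq e(R)/d!$. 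Neither the ideals $I_e$ nor ordinary powers $\m^{p^e}$ appear in the argument.
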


Using recent results of Singh, Takagi, and Varbaro \cite{STV}, we can extend Theorem \ref{Main GorGraded2} to normal standard graded Cohen-Macaulay algebras whose anti-canonical cover is Noetherian (see Corollary \ref{coroll anticancover}). These include $\QQ$-Gorenstein algebras. 

The inequality for the $F$-signature in the previous theorem is particularly meaningful because lower bounds for this invariant are typically hard to produce. The first two authors prove that $\fpt(\m)\leq -a_{\dim(R)}(R)\leq c^{\m}(\m)$ \cite{DSNBFpurity}. To obtain Theorem \ref{Main GorGraded2}, we need to extend 
these relations to the  Castelnuovo-Mumford regularity and $a$-invariants of 
Frobenius powers. Recall that, for a finitely generated $R$-module $M$, the Castelnuovo-Mumford regularity of $M$ can be defined as $\reg(M) = \max\{a_i(M)+i \mid i \in \NN\}$, where $a_i(M) = \sup\{s \in \ZZ \mid [H^i_\m(M)]_s \ne 0\}$, with the convention that $\sup(\emptyset) = -\infty$. We point out that the growth of $a_i(R/J^{[p^e]})$ and $\reg(R/J^{[p^e]})$ are of independent interest, since they are connected to discreteness of $F$-
jumping coefficients \cite{KatzmanZhang,KSSZ,ZhangRegFrob}, localization of tight closure \cite{KatzmanComplexityFrob,HunekeLC}, and existence of the generalized Hilbert-Kunz multiplicity \cite{DaoSmirnov, VraciugHK}. The following theorem is the main result we prove in this direction, and it is a key ingredient in the proof of Theorem \ref{Main GorGraded2}. 
 
\begin{theoremx}[{see Theorems \ref{Thm Limit reg-inv}, \ref{ThmRegDimR/J_t} and \ref{ThmRegDimR/J_t-1}}] \label{Main Lim Regularity}
Let $(R,\m,K)$ be a standard graded $K$-algebra that is $F$-finite and $F$-pure. Suppose that $J\subseteq R$ is a homogeneous ideal.
If  there exists a constant $C$ such that $\reg(R/J^{[p^e]})\leq Cp^e$ for all $e \gg 0$, then $\lim\limits_{e\to \infty} \frac{\reg(R/J^{[p^e]})}{p^e}$ exists, and it is bounded below by $\max_{i \in \NN}\{a_i(R/J)\}+\fpt(\m)$.

Furthermore,  for $t=\dim(R/J)$ and $D=\max\{t \in \NN \;|\;\left[ J/\m J\right]_t \neq 0\} +1$. If $H^{t-1}_\m(R/J^{[p^e]}) \ne 0$ for some $e \in \NN$, then
\[
\lim\limits_{e\to \infty} \frac{a_t(R/J^{[p^e]})}{p^e}\leq a_t(R/J)+c^{\m}(\m) \ \hbox{  and } \ \lim\limits_{e\to \infty} \frac{a_{t-1}(R/J^{[p^e]})}{p^e}\leq D(c^J(J)+1).
\]
In particular, the two limits exist.
\end{theoremx}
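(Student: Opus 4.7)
The plan is to establish monotonicity of $a_i(R/J^{[p^e]})/p^e$ via $F$-purity---which combined with the linear regularity bound gives existence of the limit---and then to derive the refined lower bound by twisted splittings, and the upper bounds on $a_t$ and $a_{t-1}$ by exploiting the long exact sequence in local cohomology associated to
\[
0 \to J/J^{[p^e]} \to R/J^{[p^e]} \to R/J \to 0.
\]

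I would first show that $a_i(R/J^{[p^e]})/p^e$ is non-decreasing in $e$. Since $R$ is $F$-pure, the natural inclusion $R \hookrightarrow F^e_*R$ splits as $R$-modules; the splitting carries $J \cdot F^e_*R = F^e_*(J^{[p^e]})$ into $J$, and hence descends to a split injection $R/J \hookrightarrow F^e_*(R/J^{[p^e]})$. Tracking graded structures, this injection sends the degree $s$ piece of $R/J$ into the degree $p^e s$ piece of $R/J^{[p^e]}$. Applying $H^i_\m(-)$, which is exact and commutes with the Frobenius pushforward, yields a split inclusion $[H^i_\m(R/J)]_s \hookrightarrow [H^i_\m(R/J^{[p^e]})]_{p^e s}$, and thus $a_i(R/J^{[p^e]}) \geq p^e a_i(R/J)$. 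Applying the same argument with $J^{[p^e]}$ in place of $J$ gives $a_i(R/J^{[p^{e+e'}]})/p^{e+e'} \geq a_i(R/J^{[p^e]})/p^e$, so the sequence is non-decreasing in $e$. Combined with $\reg(R/J^{[p^e]}) \leq C p^e$ (which bounds each $a_i/p^e$ by $C$), each limit $\lim a_i(R/J^{[p^e]})/p^e$ exists; since only finitely many indices contribute, $\lim \reg(R/J^{[p^e]})/p^e = \max_i \lim a_i(R/J^{[p^e]})/p^e$ exists as well. To sharpen the lower bound to $a_i(R/J) + \fpt(\m)$, I would replace the identity splitting by splittings twisted by homogeneous elements $u_e$ of degree $\ell_e$ tending to $p^e \fpt(\m)$, coming from the Cartier-algebra characterization of $\fpt(\m)$ in the $F$-pure standard graded setting. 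This shifts degrees and produces $a_i(R/J^{[p^e]}) \geq p^e a_i(R/J) + \ell_e$, whence the claimed lower bound for $\lim \reg(R/J^{[p^e]})/p^e$.

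For the upper bounds on $a_t$ and $a_{t-1}$ with $t = \dim(R/J)$, I would use the long exact sequence
\[
H^{t-1}_\m(R/J) \to H^t_\m(J/J^{[p^e]}) \to H^t_\m(R/J^{[p^e]}) \to H^t_\m(R/J) \to 0
\]
to deduce $a_t(R/J^{[p^e]}) \leq \max\{a_t(R/J),\ a_t(J/J^{[p^e]})\}$. The containment $\m^{\nu^\m_\m(p^e)+1} \subseteq \m^{[p^e]}$ provides annihilators that control the degrees of local cohomology classes in $J/J^{[p^e]}$, yielding $a_t(J/J^{[p^e]}) \leq a_t(R/J) + \nu^\m_\m(p^e)$; dividing by $p^e$ and using the existence of the limit from the first part gives $\lim a_t/p^e \leq a_t(R/J) + c^\m(\m)$. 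For $a_{t-1}$, an analogous argument uses $J^{\nu^J_J(p^e)+1} \subseteq J^{[p^e]}$, noting that generators of $J^{\nu^J_J(p^e)+1}$ have degree at most $(D-1)(\nu^J_J(p^e)+1)$ by the definition of $D$; this degree bound propagates through the long exact sequence to yield the asymptotic bound $D(c^J(J)+1)$. The hypothesis $H^{t-1}_\m(R/J^{[p^e]}) \neq 0$ for some $e$, combined with the monotonicity established above, ensures that $a_{t-1}/p^e$ is eventually finite and bounded, so the limit exists.

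The main obstacle is making precise the degree control on $H^t_\m(J/J^{[p^e]})$ and its $(t-1)$-analog: translating the containments $\m^{\nu+1} \subseteq \m^{[p^e]}$ and $J^{\nu+1} \subseteq J^{[p^e]}$ into quantitative degree bounds on local cohomology of subquotients requires a careful filtration argument, and the bookkeeping for the $a_{t-1}$ bound must account for both the degrees of generators of $J$ and the length of the associated Koszul-type complex. The refined lower bound involving $\fpt(\m)$ is also subtle, since the identity splitting alone yields only the weaker bound $\max_i a_i(R/J)$; the improvement requires the full Cartier-algebra description of the $F$-pure threshold in the singular graded setting.
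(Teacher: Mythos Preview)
Your treatment of the existence and lower bound is essentially the paper's argument: the paper packages the twisted splitting into a single inequality (Lemma~\ref{lemma ineq frob be}), namely $a_i(R/J^{[p^s]})/p^s + b_\m(p^e)/p^{e+s} \leq a_i(R/J^{[p^{e+s}]})/p^{e+s}$, obtained by observing that $R(-b_e)$ is a graded direct summand of $F^e_*R$ with $b_e = b_\m(p^e)/p^e$. Your two-step version (monotonicity first, then twist) reaches the same conclusion.

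The upper bound on $a_t$, however, has a genuine gap. From your short exact sequence you correctly extract $a_t(R/J^{[p^e]}) \leq \max\{a_t(R/J),\,a_t(J/J^{[p^e]})\}$, but the asserted bound $a_t(J/J^{[p^e]}) \leq a_t(R/J) + \nu^\m_\m(p^e)$ is false, and the containment $\m^{\nu+1} \subseteq \m^{[p^e]}$ does not produce annihilators of $J/J^{[p^e]}$: one has $\m^{[p^e]} J \not\subseteq J^{[p^e]}$ in general. Concretely, take $R = K[x,y]$, $J=(f)$ with $\deg f = 3$; then $J/J^{[p^e]} \cong (R/f^{\,p^e-1})(-3)$, so $a_1(J/J^{[p^e]}) = 3p^e - 2$, while $a_1(R/J) + \nu^\m_\m(p^e) = 1 + 2(p^e-1) = 2p^e - 1$. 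The paper avoids this by working on the Frobenius side: since $F^e_*(R/J^{[p^e]}) \cong (F^e_*R)/J(F^e_*R)$ and $F^e_*R$ is minimally generated in degrees at most $\nu^\m_\m(p^e)/p^e$, there is a graded surjection $\bigoplus R/J(-\gamma_i) \twoheadrightarrow F^e_*(R/J^{[p^e]})$, which immediately gives $a_t(R/J^{[p^e]})/p^e \leq a_t(R/J) + \nu^\m_\m(p^e)/p^e$.

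For the $a_{t-1}$ bound your outline is too schematic and misses the main mechanism. The paper does not use your sequence; it compares $R/J^{[p^e]}$ with an \emph{ordinary} power via
\[
0 \longrightarrow J^{[p^e]}/J^{B'p^e+1} \longrightarrow R/J^{B'p^e+1} \longrightarrow R/J^{[p^e]} \longrightarrow 0,
\]
using $J^{B'p^e+1} \subseteq J^{[p^e]}$ for $B' \geq \nu^J_J(p^e)/p^e$. The crucial external input is the Trung--Wang linear bound on $\reg(R/J^N)$ for ordinary powers, which controls both $a_{t-1}(R/J^{B'p^e+1})$ and, via a further surjection $\bigoplus (R/J^{B'p^e+1})(-d_jp^e) \twoheadrightarrow J^{[p^e]}/J^{B'p^e+1}$, the term $a_t(J^{[p^e]}/J^{B'p^e+1})$. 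Your proposed route through $J/J^{[p^e]}$ offers no analogous handle on $a_{t-1}(J/J^{[p^e]})$, and the degree of generators of $J^{\nu+1}$ alone does not bound local cohomology of that module.
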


As a consequence of Theorem \ref{Main Lim Regularity}, we obtain explicit formulas for the top $a$-invariant of the ring modulo Frobenius powers of an ideal, under the assumption that $\fpt(\m) = c^\m(\m)$ (see Proposition \ref{Prop equi frob a-inv}).

%%%%%%%%%%%%%%%%%%%%
\section{Notation and preliminaries}
%%%%%%%%%%%%%%%%%%%%

Throughout this article, $R$ denotes a Noetherian commutative ring with identity. We write $(R,\m,K)$ to mean either a local ring or a standard graded $K$-algebra. A standard graded algebra is a positively graded algebra over a field $K$, generated by finitely many elements of degree one. The ideal generated by the positive degree elements, that we denote by $\m$, is called the irrelevant maximal ideal. We denote by $\mu(M)$ the minimal number of generators of an $R$-module $M$, homogeneous in the graded case. We use $\lambda(M)$ to denote its length as an $R$-module. 
We make the convention that $0\in\NN$.

When the characteristic of $R$ is a positive prime integer $p$, we can consider the Frobenius endomorphism $F:R \to R$, that raises any element of $R$ to its $p$-th power. In this way, $R$ can be viewed as an $R$-module by restriction of scalars via $F$, and we denote this module action on $R$ by $F_* R$. The action is explicitly given as follows: for $r \in R$ and $F_*x \in F_* R$, we have $r \cdot F_*x = F_*(r^p x) \in F_* R$. For an integer $e \geq 1$, we can also reiterate the map $F$, and obtain a ring endomorphism $F^e:R \to R$ which is such that $F^e(r) = r^{p^e}$ for all $r \in R$. For any $R$-module $M$, we can consider the $R$-module $F^e_*M$, whose action is induced by restriction of scalars via $F^e$, as illustrated above in the case $e=1$ and $M=R$. For an ideal $J\subseteq R$, we denote by $J^{[p^e]}$ the ideal generated by $F^e(J)$, that is, the ideal generated by the $p^e$-th powers of elements in $J$. We note that $J^{[p^0]}=J$.

If $R$ is reduced, then, for all integers $e \geq 1$, the map $F^e$ can be identified with the $R$-module inclusion $R \subseteq R^{1/p^e}$, where $R^{1/p^e}$ denotes the ring of $p^e$-th roots of elements in $R$. This viewpoint can be  helpful to keep in mind, but it is not exploited further in this article.

\begin{definition} The ring $R$ is called $F$-finite if  $F_*R$ is a finitely generated $R$-module.
\end{definition}

Equivalently, $R$ is $F$-finite if $F^e_*R$ is a finitely generated $R$-module for some (equivalently, for all) integer $e \geq 1$. 

\begin{remark} If $(R,\m,K)$ is local, then $R$ is $F$-finite if and only if it is excellent, and $[F_*K:K]<\infty$ \cite[Corollary 2.6]{F-finExc}. If $(R,\m,K)$ is standard graded, then $R$ is $F$-finite if and only if $[F_*K:K]<\infty$ \cite[Lemma 1.5]{FedderFpurityFsing}.
\end{remark}

The  notion of $F$-purity was introduced by Hochster and Roberts \cite{HRFpurity}. Since then, it has played a very crucial role in the theory of singularities of rings of positive characteristic.

\begin{definition}[\cite{HRFpurity}] Let $R$ be a Noetherian ring of prime characteristic, and let $F:R \to R$ be the Frobenius endomorphism. Then $R$ is called {\it $F$-pure} if $F$ is a pure morphism, that is, $F \otimes 1: R\otimes M \to R \otimes M$ is injective for all $R$-modules $M$. The ring $R$ is called {\it $F$-split} if $F$ is a split monomorphism.
\end{definition}

If $R$ is $F$-finite, then $R$ is $F$-split if and only if $R$ is $F$-pure  \cite[Corollary 5.3 \& Proposition 5.5]{HRFpurity}. More explicitly, when $R$ is $F$-finite, we have that $R$ is $F$-pure if and only if it is reduced, and the natural inclusion $R \subseteq F^e_*R$ of $R$-modules splits for some (equivalently, for all) $e \geq 1$. 

We now recall  the graded version of Fedder's Criterion, which characterizes $F$-pure rings that are quotients of regular rings. This  result is needed to establish some reductions for Theorem \ref{main thm}.

\begin{theorem}[{\cite[Theorem 1.12]{FedderFpurityFsing}}]\label{Fedder}
Let $S=K[x_1,\ldots,x_n]$ be a polynomial ring over a field of prime characteristic $p$. Suppose that $\deg(x_i)=1,$ and $\n=(x_1,\ldots,x_n)$.
Let $I\subseteq \n$ be a homogeneous ideal. Under these assumptions $S/I$ is $F$-pure if and only if 
$(I^{[p]}:_SI)\not\subseteq \n^{[p]}.$
\end{theorem}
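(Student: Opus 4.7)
The plan is to characterize $F$-purity of $S/I$ through the structure of $\Hom_S(F_*S, S)$, exploiting that $S$ is regular and $F$-finite (I assume for simplicity that $K$ is perfect; the general $F$-finite case requires only an additional $K^p$-basis of $K$) so that $F_*S$ is a free $S$-module on the basis $\{e_\alpha := F_*(x^\alpha) : \alpha \in \{0, \ldots, p-1\}^n\}$. Since $S/I$ is likewise $F$-finite, $F$-purity is equivalent to $F$-splitting, so the goal becomes deciding when the Frobenius on $S/I$ splits.

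First, I fix the \emph{trace} functional $\Phi \in \Hom_S(F_*S, S)$ dual to the top basis vector $e_{(p-1,\ldots,p-1)}$. A direct check using the freeness of $F_*S$ (this is where regularity of $S$ is essential) shows that $\Hom_S(F_*S, S)$ is a rank-one free $F_*S$-module generated by $\Phi$; hence every $S$-linear map $F_*S \to S$ has the form $\phi_c(F_*t) = \Phi(F_*(ct))$ for a unique $c \in S$.

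Second, I establish the key identity: for any ideals $L, L' \subseteq S$,
\[
\Phi(F_*L) \subseteq L' \iff L \subseteq (L')^{[p]}.
\]
The $(\Leftarrow)$ direction follows from the equality $F_*((L')^{[p]}) = L' \cdot F_*S$, which is visible from the basis since $F_*(\ell^p) = \ell \cdot e_0$ for $\ell \in L'$, after which $\Phi$ sends $L' \cdot F_*S$ into $L'$. For $(\Rightarrow)$, if $\ell = \sum_\alpha u_\alpha^p x^\alpha \in L$ has some $u_{\alpha_0} \notin L'$, then $\ell' := x^{(p-1)\mathbf{1} - \alpha_0} \ell \in L$ has a basis decomposition in which only the $\alpha = \alpha_0$ term contributes to the coefficient of $e_{(p-1,\ldots,p-1)}$, giving $\Phi(F_*\ell') = u_{\alpha_0} \notin L'$. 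Applying this with $L = cI, L' = I$, the map $\phi_c$ descends to $\bar\phi_c : F_*(S/I) \to S/I$ iff $c \in (I^{[p]} :_S I)$; and $\bar\phi_c$ splits Frobenius iff $\bar\phi_c(F_*\bar 1) = \bar 1$, i.e.\ $\Phi(F_*c) \equiv 1 \pmod{I}$.

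Third, since $(S/I, \m/I)$ has a unique maximal (resp.\ homogeneous maximal) ideal, a splitting exists iff the $S/I$-linear evaluation map $(I^{[p]} :_S I)/I^{[p]} \to S/I$, $c \mapsto \overline{\Phi(F_*c)}$, is surjective, which holds iff $\Phi(F_*(I^{[p]} :_S I)) \not\subseteq \m$. Applying the boxed identity once more with $L' = \n$ converts this into $(I^{[p]} :_S I) \not\subseteq \n^{[p]}$, finishing the proof. The step I expect to be most delicate is the $(\Rightarrow)$ direction of the boxed identity, where one has to track carefully how the monomial shift $x^{(p-1)\mathbf{1} - \alpha_0}$ interacts with the basis decomposition of $\ell$ and confirm that the top-basis contribution of $\ell'$ is exactly $u_{\alpha_0}$.
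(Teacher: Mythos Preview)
The paper does not prove this statement; it is quoted as Theorem~\ref{Fedder} with a citation to Fedder's original article and used as a black box (in Lemma~\ref{lem: Inf residue field} and Example~\ref{exVraciu}). There is therefore no ``paper's own proof'' to compare against.

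Your argument is the standard modern proof of Fedder's criterion via the Cartier/trace map $\Phi$ and the cyclicity of $\Hom_S(F_*S,S)$ as an $F_*S$-module, and it is correct. The step you flagged as delicate is indeed the crux, and your monomial-shift computation handles it: writing $\alpha + (p-1)\mathbf 1 - \alpha_0 = p\,q_\alpha + r_\alpha$ componentwise, the condition $r_\alpha = (p-1)\mathbf 1$ forces $\alpha_i \equiv (\alpha_0)_i \pmod p$, hence $\alpha = \alpha_0$ and $q_{\alpha_0}=0$, so the top-basis coefficient of $F_*\ell'$ is exactly $u_{\alpha_0}$. Two small points worth making explicit in a final write-up: (i) the identification $\Hom_{S/I}(F_*(S/I),S/I)\cong (I^{[p]}:_S I)/I^{[p]}$ relies also on the vanishing case $\bar\phi_c = 0 \Leftrightarrow c\in I^{[p]}$, which follows from your key identity with $L=cS$, $L'=I$; and (ii) the image of the evaluation-at-$F_*1$ map is an ideal of $S/I$ because $\Hom_{S/I}(F_*(S/I),S/I)$ is an $S/I$-module and evaluation is $S/I$-linear, which is what lets you test containment only against the single maximal ideal $\n/I$.
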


%%%%%%%%%%%%%%%%%%%%%%%%%%%%%%%%
\section{The $F$-threshold of $\a$ with respect to $J$}
%%%%%%%%%%%%%%%%%%%%%%%%%%%%%%%%

The $F$-thresholds are invariants of rings in positive characteristic obtained by comparing powers of an ideal $\a$ with Frobenius powers of another ideal $J$. They were first introduced in the regular ring setting \cite{MTW} and, later, generalized to a wider class of rings \cite{HMTW}. The $F$-thresholds were originally defined as limits of sequences of rational numbers, whenever such sequences were convergent. However, their existence remained an open problem. In this section, we show that $F$-thresholds exist in general.

\begin{definition}
Let $R$ be a ring of prime characteristic $p$. For $\a,J$ two ideals  of  $R$  satisfying $\a \subseteq \sqrt{J}$, and a non-negative integer $e$, we define
\[ 
\nu^J_\a(p^e):=\max\{t \in \NN \mid \a^t \not\subseteq J^{[p^e]} \}.
\] 
\end{definition}

The following lemma  is  well-known. We include the proof for the sake of completeness.

\begin{lemma}\label{Lemma Obs pe u(I)}
Let  $R$ be a ring of prime characteristic $p$, and $\a$ be an ideal. Then, for every $s,e\in\NN$ and  $r\geq (\mu(\a)+s-1) p^e$, we have that  $\a^r= \a^{r-s p^e}\left(\a^{[p^e]}\right)^{s}$.
\end{lemma}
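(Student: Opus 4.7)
The plan is to prove both inclusions. The containment $\a^{r-sp^e}(\a^{[p^e]})^s \subseteq \a^r$ is immediate because $\a^{[p^e]} \subseteq \a^{p^e}$, so $(\a^{[p^e]})^s \subseteq \a^{sp^e}$, and multiplying by $\a^{r-sp^e}$ lands in $\a^r$. The nontrivial inclusion is $\a^r \subseteq \a^{r-sp^e}(\a^{[p^e]})^s$, and it suffices to verify this on generators.

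Set $n := \mu(\a)$ and pick generators $f_1,\ldots,f_n$ of $\a$, so that $\a^r$ is generated by monomials of the form $f_1^{a_1}\cdots f_n^{a_n}$ with $a_1+\cdots+a_n=r$. For each $i$, write $a_i = q_i p^e + t_i$ with $0\leq t_i \leq p^e-1$ via division with remainder. Then
\[
f_1^{a_1}\cdots f_n^{a_n} \;=\; \left(\prod_{i=1}^n (f_i^{p^e})^{q_i}\right)\cdot \left(\prod_{i=1}^n f_i^{t_i}\right),
\]
so the first factor lies in $(\a^{[p^e]})^{q_1+\cdots+q_n}$.

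The key step is to show that the hypothesis $r\geq (n+s-1)p^e$ forces $q_1+\cdots+q_n\geq s$. From $\sum_i t_i \leq n(p^e-1)$ and $\sum_i a_i = r$, one gets $p^e \sum_i q_i \geq r - n(p^e-1) \geq (s-1)p^e + n$, hence $\sum_i q_i \geq s-1 + n/p^e > s-1$; since $\sum_i q_i$ is an integer, $\sum_i q_i \geq s$. The main (only) obstacle is this counting, and it is straightforward. With $\sum_i q_i \geq s$ in hand, I can split
\[
\prod_{i=1}^n (f_i^{p^e})^{q_i} \;\in\; (\a^{[p^e]})^s\cdot (\a^{[p^e]})^{\sum_i q_i - s} \;\subseteq\; (\a^{[p^e]})^s\cdot \a^{p^e(\sum_i q_i)-sp^e},
\]
and combining with $\prod_i f_i^{t_i}\in \a^{\sum_i t_i}$ yields $f_1^{a_1}\cdots f_n^{a_n}\in (\a^{[p^e]})^s\cdot \a^{r-sp^e}$, as required. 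This completes both inclusions and the proof.
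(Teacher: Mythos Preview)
Your proof is correct. The approach differs from the paper's: the paper argues by induction on $s$, reducing the inductive step to the classical pigeonhole fact (the case $s=1$) that $\a^{r'} = \a^{r'-p^e}\a^{[p^e]}$ whenever $r' \geq \mu(\a)p^e$, whereas you handle all $s$ at once by writing each exponent $a_i = q_i p^e + t_i$ via division with remainder and verifying the counting inequality $\sum_i q_i \geq s$ directly. Your argument is more self-contained, since the paper's induction starts at $s=0$ but invokes the case $s=1$ in the inductive step without proving it separately; your division-with-remainder computation subsumes that base case. Both proofs are elementary and of comparable length; the induction is perhaps cleaner once the $s=1$ case is taken for granted, while your direct argument makes the combinatorics fully explicit.
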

\begin{proof}
Let $u=\mu(\a),$ and let $f_1,\ldots, f_u$ denote a minimal set of generators for 
$I.$
We proceed by induction on $s$. For  $s=0$, the statement is clear as  $ \a^{r-0 p^e}\left(\a^{[p^e]}\right)^0=\a^r R=\a^r$.

We now assume that our claim is true for $s$, and prove it for $s+1.$
Suppose that $r\geq  (u+s)p^e$. Then
\begin{align*}
\a^r &= \a^{r-s p^e}\left(I^{[p^e]}\right)^{s}  \hbox{ by induction hypothesis since } r\geq  (u+s-1)p^e\\
&= \a^{r-(s+1) p^e}\a^{[p^e]}\left(\a^{[p^e]}\right)^{s}
 \hbox{ by the case }s=1\hbox{ because } r-s p^e\geq  up^e\\
&= \a^{r-(s+1) p^e}\left(\a^{[p^e]}\right)^{s+1}. 
\end{align*}
\end{proof}

We point out that the following lemma has been previously stated,  without proof, for reduced rings \cite[Remark 1.5]{HTW}.

\begin{lemma}\label{Bound of Ve}
Let  $R$ be a ring of prime characteristic $p$. Let 
$\a, J\subseteq R$ be ideals such that $\a\subseteq \sqrt{J}$. Then,
 \[ \frac{\nu^J_\a(p^{e_1+e_2})}{p^{e_1+e_2}}-
\frac{\nu^J_\a(p^{e_1})}{p^{e_1}}\leq \frac{\mu(\a)}{p^{e_1}}
\]
for every $e_1,e_2\in\NN.$
\end{lemma}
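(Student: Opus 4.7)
The plan is to bootstrap the containment that defines $\nu^J_\a(p^{e_1})$ into a containment governing $\nu^J_\a(p^{e_1+e_2})$, using Lemma \ref{Lemma Obs pe u(I)} to repackage high powers of $\a$ into Frobenius powers.

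Set $\nu_1 := \nu^J_\a(p^{e_1})$ and $u := \mu(\a)$. By maximality in the definition of $\nu_1$, we have $\a^{\nu_1+1} \subseteq J^{[p^{e_1}]}$. Raising this containment to the $p^{e_2}$-bracket power (that is, applying $F^{e_2}$ to a set of generators and taking the ideal they span) yields
\[
(\a^{\nu_1+1})^{[p^{e_2}]} \subseteq (J^{[p^{e_1}]})^{[p^{e_2}]} = J^{[p^{e_1+e_2}]}.
\]
A direct check on monomials in a fixed generating set of $\a$ shows $(\a^{\nu_1+1})^{[p^{e_2}]} = (\a^{[p^{e_2}]})^{\nu_1+1}$, so in fact $(\a^{[p^{e_2}]})^{\nu_1+1} \subseteq J^{[p^{e_1+e_2}]}$.

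Next, I would invoke Lemma \ref{Lemma Obs pe u(I)} with $s = \nu_1 + 1$ and $e = e_2$: for every $r \geq (u + \nu_1)\,p^{e_2}$ one has
\[
\a^r = \a^{r - (\nu_1+1)p^{e_2}} \bigl(\a^{[p^{e_2}]}\bigr)^{\nu_1+1} \subseteq J^{[p^{e_1+e_2}]}.
\]
This shows that $\a^r \subseteq J^{[p^{e_1+e_2}]}$ for all $r \geq (u+\nu_1)p^{e_2}$, whence by definition of $\nu^J_\a(p^{e_1+e_2})$ as a maximum,
\[
\nu^J_\a(p^{e_1+e_2}) \leq (u+\nu_1)\,p^{e_2} - 1 < (\mu(\a) + \nu_1)\,p^{e_2}.
\]

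Finally, I would divide through by $p^{e_1+e_2}$ to get
\[
\frac{\nu^J_\a(p^{e_1+e_2})}{p^{e_1+e_2}} < \frac{\mu(\a) + \nu_1}{p^{e_1}} = \frac{\mu(\a)}{p^{e_1}} + \frac{\nu^J_\a(p^{e_1})}{p^{e_1}},
\]
which rearranges to the desired inequality. There is essentially no obstacle beyond correctly applying Lemma \ref{Lemma Obs pe u(I)}; the only subtle point is the identification $(\a^{[p^{e_2}]})^{\nu_1+1} = (\a^{\nu_1+1})^{[p^{e_2}]}$, which holds since both ideals are generated by the same $p^{e_2}$-th powers of degree-$(\nu_1+1)$ monomials in the generators of $\a$.
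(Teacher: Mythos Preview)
Your proof is correct and follows essentially the same route as the paper: invoke Lemma~\ref{Lemma Obs pe u(I)} with $s=\nu^J_\a(p^{e_1})+1$ and $e=e_2$ to see that $\a^{(\mu(\a)+\nu_1)p^{e_2}}\subseteq (\a^{[p^{e_2}]})^{\nu_1+1}=(\a^{\nu_1+1})^{[p^{e_2}]}\subseteq J^{[p^{e_1+e_2}]}$, then divide by $p^{e_1+e_2}$. The paper writes the chain of containments in one display rather than splitting off the identity $(\a^{[p^{e_2}]})^{\nu_1+1}=(\a^{\nu_1+1})^{[p^{e_2}]}$ as a separate step, but the argument is the same (and your version even records the extra $-1$).
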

\begin{proof}
Taking $s= \nu^J_\a(p^{e_1})+1$ in Lemma  \ref{Lemma Obs pe u(I)}  yields
\begin{align*}
\a^{p^{e_2} \mu(\a) +p^{e_2} \nu^J_\a(p^{e_1})} &\subseteq 
\a^{p^{e_2} \mu(\a)-p^{e_2}} \left( \a^{[p^{e_2}]}\right) ^{\nu^J_\a(p^{e_1})+1}
\\
&\subseteq 
 \left( \a^{[p^{e_2}]}\right)^{\nu^J_\a(p^{e_1})+1}=\left( \a^{\nu^J_\a(p^{e_1})+1}\right)^{[p^{e_2}]}\\
 & \subseteq  \left( J^{[p^{e_1}]}\right)^{[p^{e_2}]}=  J^{[p^{e_1+e_2}]}.
\end{align*}
Hence, $\nu^J_\a(p^{e_1+e_2})\leq  p^{e_2} \mu(\a) +p^{e_2} \nu^J_\a(p^{e_1})$. The result follows from dividing by $p^{e_1+e_2}.$
\end{proof}

We now show the existence of $F$-thresholds in full generality.

\begin{theorem}\label{Existence c}
Let  $R$ be a ring of prime characteristic $p$. If 
$\a, J\subseteq R$ are ideals such that $\a \subseteq \sqrt{J}$, then 
$\lim\limits_{e\to \infty} \frac{ \nu^J_\a(p^{e})}{p^e}$
exists.
\end{theorem}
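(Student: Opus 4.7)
My plan is to deduce convergence of $a_e := \nu^J_\a(p^e)/p^e$ from the almost-subadditivity established in Lemma~\ref{Bound of Ve}, which gives
\[
a_{e_1 + e_2} \leq a_{e_1} + \frac{\mu(\a)}{p^{e_1}}
\]
for all $e_1, e_2 \in \NN$. The idea is that if one also shows that $(a_e)$ is bounded, then this almost-subadditivity forces $\limsup_e a_e \leq \liminf_e a_e$, and the existence of the limit follows.

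First I would handle boundedness. Clearly $a_e \geq 0$, so only an upper bound is at stake. Since $\a \subseteq \sqrt{J}$, there exists $N \in \NN$ such that $\a^N \subseteq J$, and therefore $(\a^{[p^e]})^N = (\a^N)^{[p^e]} \subseteq J^{[p^e]}$ for every $e$. Taking $s = N$ in Lemma~\ref{Lemma Obs pe u(I)}, any integer $r \geq (\mu(\a) + N - 1)p^e$ satisfies $\a^r \subseteq (\a^{[p^e]})^N \subseteq J^{[p^e]}$, which forces $\nu^J_\a(p^e) < (\mu(\a) + N - 1)p^e$. Hence $a_e$ is uniformly bounded above by the constant $\mu(\a) + N - 1$.

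With boundedness in hand, the second step is a standard $\limsup$--$\liminf$ squeeze. For each fixed $e_1 \in \NN$, letting $e_2 \to \infty$ in the inequality from Lemma~\ref{Bound of Ve}, and using that $(a_{e_1 + e_2})_{e_2 \geq 0}$ is a cofinal subsequence of $(a_e)$, yields
\[
\limsup_{e\to\infty} a_e \leq a_{e_1} + \frac{\mu(\a)}{p^{e_1}}.
\]
Now passing to $\liminf$ over $e_1$ on the right-hand side makes the error term disappear and delivers $\limsup_e a_e \leq \liminf_e a_e$; since the reverse inequality always holds, the two coincide and the limit exists.

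The substantive content has already been packaged into Lemma~\ref{Bound of Ve}, so I do not expect any serious further obstacle. The only additional technical input is the uniform upper bound of the previous paragraph, itself an immediate consequence of Lemma~\ref{Lemma Obs pe u(I)} together with the hypothesis $\a \subseteq \sqrt{J}$.
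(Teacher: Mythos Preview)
Your proof is correct and follows essentially the same $\limsup$--$\liminf$ squeeze via Lemma~\ref{Bound of Ve} that the paper uses. Your separate boundedness paragraph is fine but not strictly needed: once you have $\limsup_e a_e \leq a_{e_1} + \mu(\a)/p^{e_1}$ for each fixed $e_1$, finiteness of the $\limsup$ is already immediate.
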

\begin{proof}
From Lemma \ref{Bound of Ve} we have 
\[ \frac{\nu^J_\a(p^{e_1+e_2})}{p^{e_1+e_2}}\leq 
\frac{\nu^J_\a(p^{e_1})}{p^{e_1}}+ \frac{\mu(\a)}{p^{e_1}}.\]
Therefore,
\[
\limsup\limits_{e\to\infty} \frac{\nu^J_\a(p^{e})}{p^{e}}=
\limsup\limits_{e_2\to\infty} \frac{\nu^J_\a(p^{e_1+e_2})}{p^{e_1+e_2}}\leq 
\frac{\nu^J_\a(p^{e_1})}{p^{e_1}}+ \frac{\mu(\a)}{p^{e_1}}.\]
Hence,
\[
\limsup\limits_{e\to\infty} \frac{\nu^J_\a(p^{e})}{p^{e}}
\leq 
\liminf_{e_1\to\infty}\left(\frac{\nu^J_\a(p^{e_1})}{p^{e_1}}+ \frac{\mu(\a)}{p^{e_1}}\right)
=\liminf_{e\to\infty}\frac{\nu^J_\a(p^{e})}{p^{e}}.\]
We conclude that 
$\lim\limits_{e\to \infty} \frac{ \nu^J_\a(p^{e})}{p^e}$
exists.

\end{proof}

After Theorem \ref{Existence c}, we can define $F$-thresholds in full generality.

\begin{definition}
Let $R$ be a ring of prime characteristic $p$. Let $\a, J$ be ideals of $R$ such that $\a\subseteq \sqrt{J}$. We define \emph{the $F$-threshold of $\a$ with respect to $J$} by
\[ 
c^J(\a)=\lim_{e\to \infty} \frac{\nu^J_\a(p^e)}{p^e}.
\]  
\end{definition}

We recall some known properties of $F$-thresholds, that we need in what follows.

\begin{proposition}[{\cite[Proposition 2.7]{MTW} \& \cite[Proposition 2.2]{HMTW}}]\label{Prop C}
Let $R$ be a ring of prime characteristic $p$, and let $\a,I,J$ be ideals of $R$. Then

\begin{itemize}
\item[(a)] If $I \supseteq J$ and $\a \subseteq \sqrt{J}$, then $c^I(\a)\leq c^J(\a)$.
\item[(b)] If $\a \subseteq \sqrt{J}$, then $c^{J^{[p]}}(\a)=p \cdot c^{J}(\a)$.
\end{itemize}  
\end{proposition}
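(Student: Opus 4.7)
The plan for both parts is to prove the corresponding inequality (or equality) at the finite level of the $\nu^J_\a(p^e)$'s, and then pass to the limit using Theorem \ref{Existence c} to conclude.

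For part (a), the first observation is that the hypothesis $\a \subseteq \sqrt{J}$ combined with $I \supseteq J$ forces $\a \subseteq \sqrt{J} \subseteq \sqrt{I}$, so $\nu^I_\a(p^e)$ is defined. Next, I would note that taking $p^e$-th powers preserves inclusions, so $I \supseteq J$ gives $I^{[p^e]} \supseteq J^{[p^e]}$ for every $e$. Consequently, if $\a^t \not\subseteq I^{[p^e]}$, then a fortiori $\a^t \not\subseteq J^{[p^e]}$, which yields the pointwise inequality
\[
\nu^I_\a(p^e) \leq \nu^J_\a(p^e)
\]
for each $e \in \NN$. Dividing by $p^e$ and using Theorem \ref{Existence c} to pass to the limit delivers $c^I(\a) \leq c^J(\a)$.

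For part (b), the key identity is $(J^{[p]})^{[p^e]} = J^{[p^{e+1}]}$, which follows from the fact that the ideal generated by $p^e$-th powers of $p$-th powers of generators of $J$ coincides with the ideal generated by $p^{e+1}$-th powers of those same generators. Plugging this into the definition of $\nu$ gives
\[
\nu^{J^{[p]}}_\a(p^e) = \max\set{t \in \NN \mid \a^t \not\subseteq (J^{[p]})^{[p^e]}} = \nu^J_\a(p^{e+1}).
\]
Dividing by $p^e$ yields
\[
\frac{\nu^{J^{[p]}}_\a(p^e)}{p^e} = p \cdot \frac{\nu^J_\a(p^{e+1})}{p^{e+1}},
\]
and letting $e \to \infty$, using once again Theorem \ref{Existence c} to guarantee convergence of both sides (noting $\a \subseteq \sqrt{J} = \sqrt{J^{[p]}}$ so that both $F$-thresholds are defined), gives $c^{J^{[p]}}(\a) = p \cdot c^J(\a)$.

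There is no real obstacle here: both statements are essentially a combination of unwinding the definition of $\nu^J_\a(p^e)$ with the basic behavior of Frobenius powers. The only conceptual input beyond the definitions is the existence of the limit defining $c^J(\a)$, which is precisely what Theorem \ref{Existence c} provides, and the verification that the radical condition $\a \subseteq \sqrt{\,\cdot\,}$ is preserved in both settings.
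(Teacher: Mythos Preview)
Your argument is correct and complete. The paper does not supply its own proof of this proposition; it simply cites \cite[Proposition~2.7]{MTW} and \cite[Proposition~2.2]{HMTW}, and your proof is exactly the standard elementary argument found in those references.
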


\section{A characterization of $\fpt(\a)$}

In this section, we give a characterization of the $F$-pure threshold of a ring as the limit of certain $F$-thresholds. We start by recalling its definition and elementary properties. Before doing this, we need some auxiliary definitions. 

Let $(R,\m,K)$ be either a local ring or a standard graded $K$-algebra. The following ideals, introduced by Aberbach and Enescu \cite{AE},  keep track of the $R$-linear homomorphisms from $F^e_*R$ to $R$ that do not give splittings. For $e \in \NN$, we set
\[ I_e:= \left\{ f\in R  \left| \psi(F^e_*f)\in \m, \text{ for all $R$-linear maps } \psi:F^e_*R\to R \right. \right\}. \] 
In particular, $I_0=\m$.
\begin{remark}
\label{Rem_split_I_e}
Let $(R,\m,K)$ be a local ring. If $f \notin I_e$ for some $e$, then there exists a map $\psi:F^e_*R \to R$ that splits the $R$-module inclusion $F^e_*f \cdot R \subseteq F^e_*R$. When $(R,\m,K)$ is standard graded, the ideals $I_e$ are homogeneous, and the same conclusion is true for a homogeneous element $f \notin I_e$ and homogeneous splitting maps.
\end{remark}

\begin{definition}
Let $(R,\m,K)$ be either a local ring, or a standard graded $K$-algebra.
Suppose that $R$ is an $F$-pure ring. For $e \in \NN$, we associate to the ideals $I_e$ the following integers 
\[ 
b_\a(p^e):=\max \left\{t \in \NN \mid \a^t \not\subseteq I_e \right\}.
\]
Given a proper ideal $\a \subseteq R$, homogeneous when $R$ is graded, we define the \emph{$F$-pure threshold of $\a$ in $R$} as
\[
\fpt(\a):=\lim_{e\to \infty} \frac{b_\a(p^e)}{p^e}.
\]
When $\a=\m$, the $F$-pure threshold $\fpt(\m)$ is often simply denoted by $\fpt(R)$.
\end{definition}

\begin{remark}
The definition presented above is not the original  given  by Takagi and Watanabe \cite{TW2004}. For a real number $\lambda\geq 0$, we say that $(R,\a^\lambda)$ is $F$-pure if for every $e\gg 0$, there exists an element $f\in \a^{\lfloor (p^e-1)\lambda\rfloor}$ such that the inclusion of $R$-modules $F^e_*f \cdot  R\subseteq F^e_* R$ splits.
The original definition of the $F$-pure threshold of $\a$ is
\[ 
\fpt(\a)=\sup\left\{ \lambda\in \RR_{> 0}\ | \ (R,\a^{\lambda})\hbox{ is }F\hbox{-pure} \right\}.\] We refer to \cite[Proposition 3.10]{DSNBFpurity} for a proof that both definitions coincide.
\end{remark}

\begin{remark} \label{Rem Diag Deg Gen}
If $(R,\m,K)$ is a standard graded $F$-pure $K$-algebra, then $b :=\frac{b_\m(p^e)}{p^e}$ is the highest possible degree for a minimal generator of the free part of $F^e_*R$ with the natural $\frac{1}{p^e}\NN$ grading. In other words, if 
$F^e_*R\cong \oplus \left( R(-\gamma_i)\right) \oplus M_e$ as $\frac{1}{p^e}\NN$-graded modules, where $M_e$ is a graded $R$-module with no free summands, then $b=\max\{\gamma_i\}$. This follows from the definition of $b_\m(p^e)$ and by Remark \ref{Rem_split_I_e}.

\end{remark}

\begin{proposition} [{\cite[Lemma 4.4]{TuckerFSig}}] \label{Prop Ie}
Let $(R,\m,K)$ be either a standard graded $K$-algebra or a local ring. Assume that $R$ is $F$-finite. If $R$ is an $F$-pure ring, then $I_e^{[p]}\subseteq I_{e+1}$ for all  $e \in\NN$. 
\end{proposition}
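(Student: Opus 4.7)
The plan is to reduce the inclusion $I_e^{[p]} \subseteq I_{e+1}$ to the assertion that $g^p \in I_{e+1}$ whenever $g \in I_e$. This reduction is valid once we know each $I_j$ is an ideal. Additive closure follows from the $R$-linearity of maps $\psi \colon F^j_* R \to R$, while closure under scalar multiplication by $r \in R$ can be seen via the twisted $R$-linear map $\psi_r(F^j_* x) := \psi(F^j_*(rx))$, whose $R$-linearity uses only the identity $s \cdot F^j_* x = F^j_*(s^{p^j} x)$; plugging $f \in I_j$ into $\psi_r$ then gives $\psi(F^j_*(rf)) = \psi_r(F^j_* f) \in \m$, so $rf \in I_j$.

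Next, fix $g \in I_e$ and an arbitrary $R$-linear map $\varphi \colon F^{e+1}_* R \to R$. The main step is the construction of an auxiliary $R$-linear map $\tilde\varphi \colon F^e_* R \to R$ defined by
\[
\tilde\varphi(F^e_* r) := \varphi(F^{e+1}_* r^p).
\]
Equivalently, $\tilde\varphi = \varphi \circ \iota$, where $\iota \colon F^e_* R \to F^{e+1}_* R$, $F^e_* r \mapsto F^{e+1}_* r^p$, is obtained by applying the functor $F^e_*$ to the $R$-linear Frobenius $R \to F_* R$, $r \mapsto F_* r^p$. The $R$-linearity of $\tilde\varphi$ reduces to the identity $(s^{p^e} r)^p = s^{p^{e+1}} r^p$ in characteristic $p$, which I would verify by a one-line computation.

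With $\tilde\varphi$ in hand, the definition of $I_e$ forces $\tilde\varphi(F^e_* g) = \varphi(F^{e+1}_* g^p) \in \m$, and since $\varphi$ was arbitrary this yields $g^p \in I_{e+1}$, completing the proof. In the standard graded setting one takes all relevant maps homogeneous, and the construction of $\iota$ preserves this, so the argument transports verbatim. There is no real obstacle beyond setting up the pullback map $\tilde\varphi$; in particular, the $F$-purity hypothesis is not actually used in the argument, though it is what makes the ideals $I_j$ interesting to study.
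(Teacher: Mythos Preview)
Your argument is correct. The paper itself does not supply a proof of this proposition; it simply cites \cite[Lemma 4.4]{TuckerFSig}. What you have written is essentially the standard proof one finds there: factor an arbitrary $\varphi\colon F^{e+1}_*R\to R$ through the Frobenius $F^e_*R\to F^{e+1}_*R$, $F^e_*r\mapsto F^{e+1}_*r^p$, and use the defining property of $I_e$ for the composite. Your observation that $F$-purity is not actually invoked is also accurate; the hypothesis is there only because without it the ideals $I_e$ all equal $R$ and the statement is vacuous.
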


We now present the main result of this section. Namely, we show that the $F$-pure threshold of an ideal is a limit of $F$-thresholds. For principal ideals, this follows from the characterization of the digits in the base $p$-expansion of $\fpt(\a)$  \cite[Key Lemma]{DanielMRL}.

\begin{theorem}\label{Main 1}
Let $(R,\m,K)$ be either a standard graded $K$-algebra or a local ring. Assume further that $R$ is $F$-finite and $F$-pure. If $\a$ is an ideal of $R$, homogeneous in case $R$ is graded, then 
\[ 
\fpt(\a)=\lim_{e\to\infty}\frac{c^{I_e}(\a)}{p^e}.
\]
\end{theorem}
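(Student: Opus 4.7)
The strategy is to sandwich $\fpt(\a) = \lim_e b_\a(p^e)/p^e$ between the liminf and limsup of $c^{I_e}(\a)/p^e$, using only two observations: that $(I_e)$ behaves well under Frobenius powers via Proposition \ref{Prop Ie}, and that the finite-stage bound in Lemma \ref{Bound of Ve} specializes nicely when $e_1 = 0$. Note that $c^{I_e}(\a)$ is well-defined: since $R$ is $F$-pure and $F$-finite, $I_e \supseteq \m^{[p^e]}$ (because $\m^{[p^e]} \subseteq I_e$ follows from the fact that any $\psi \colon F^e_* R \to R$ sends $F^e_*(m^{p^e}) = m \cdot F^e_*1$ into $\m$ whenever $m \in \m$), so $\sqrt{I_e} \supseteq \m$, and thus $\a \subseteq \sqrt{I_e}$.

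For the lower bound $\fpt(\a) \leq c^{I_e}(\a)/p^e$, I would iterate Proposition \ref{Prop Ie} to obtain $I_e^{[p^{e'}]} \subseteq I_{e+e'}$ for all $e, e' \in \NN$. Contrapositively, if $\a^t \not\subseteq I_{e+e'}$ then $\a^t \not\subseteq I_e^{[p^{e'}]}$, giving
\[
b_\a(p^{e+e'}) \;\leq\; \nu^{I_e}_\a(p^{e'}).
\]
Dividing by $p^{e+e'}$ and letting $e' \to \infty$ yields $\fpt(\a) \leq c^{I_e}(\a)/p^e$ for every $e$, hence $\fpt(\a) \leq \liminf_e c^{I_e}(\a)/p^e$.

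For the upper bound, observe that $\nu^{I_e}_\a(p^0) = b_\a(p^e)$ by definition. Applying Lemma \ref{Bound of Ve} to $J = I_e$ with $e_1 = 0$ and $e_2 = e'$ gives
\[
\frac{\nu^{I_e}_\a(p^{e'})}{p^{e'}} \;\leq\; b_\a(p^e) + \mu(\a).
\]
Letting $e' \to \infty$ produces $c^{I_e}(\a) \leq b_\a(p^e) + \mu(\a)$. Dividing by $p^e$ and letting $e \to \infty$ gives
\[
\limsup_{e\to\infty} \frac{c^{I_e}(\a)}{p^e} \;\leq\; \lim_{e\to\infty} \frac{b_\a(p^e)}{p^e} + \lim_{e\to\infty}\frac{\mu(\a)}{p^e} \;=\; \fpt(\a).
\]
The two bounds together force the limit to exist and equal $\fpt(\a)$.

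The main conceptual point—and the only step requiring real input beyond the definitions—is the compatibility $I_e^{[p^{e'}]} \subseteq I_{e+e'}$, which is exactly the content of Proposition \ref{Prop Ie} iterated. Everything else is an application of Lemma \ref{Bound of Ve} and the definitions; no cardinality or base-$p$-expansion arguments are needed, which is what makes the result work for arbitrary (not just principal) ideals $\a$. The one place where graded versus local matters is only in confirming that $I_e$ is homogeneous in the graded case so that $b_\a(p^e)$ and $\nu^{I_e}_\a(p^{e'})$ are unambiguous; this is recorded in Remark \ref{Rem_split_I_e}.
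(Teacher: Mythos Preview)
Your proof is correct and follows essentially the same route as the paper: the upper bound via Lemma~\ref{Bound of Ve} with $e_1=0$ is identical, and your lower bound $\fpt(\a)\le c^{I_e}(\a)/p^e$ via the iterated containment $I_e^{[p^{e'}]}\subseteq I_{e+e'}$ is a mild repackaging of the paper's argument, which instead first shows $\{c^{I_e}(\a)/p^e\}_e$ is non-increasing (using Proposition~\ref{Prop C}) and then uses $F$-purity to get $b_\a(p^e)\le c^{I_e}(\a)$. The ingredients and the sandwich structure are the same.
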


\begin{proof}
By Propositions \ref{Prop Ie} and \ref{Prop C}(a) we have that 
$c^{I_{e+1}}(\a) \leq c^{I_e^{[p]}}(\a)$ for all $e \in \NN$. In addition,  $c^{I_e^{[p]}}(\a)=  p\cdot c^{I_e}(\a)$ by Proposition \ref{Prop C}(b).
Hence, 
\[ 
0 \leq \frac{c^{I_{e+1}}(\a)}{p^{e+1}}\leq \frac{c^{I_e}(\a)}{p^e},
\]
for all $e \in \NN$, which shows the sequence $\{\frac{c^{I_e}(\a)}{p^e}\}_{e \in \NN}$ is non-increasing, and bounded below by zero. As a consequence, it does converge to a limit as $e$ approaches infinity. 

Note that, for all $e \in \NN$, we have $b_\a(p^e)=\max \left\{ t\in \NN \mid  \a^t\not\subseteq I_e \right\}= \nu_\a^{I_e}(p^0)$. Let $s \in \NN$ be an arbitrary integer. By taking $e_1=0$ and $e_2=s$ in Lemma \ref{Bound of Ve}, we deduce that
\[ 
\frac{ \nu^{I_e}_\a(p^s)}{p^s}-b_\a(p^e)\leq \mu (\a) .
\]  
In addition,
$$
0\leq\frac{ \nu^{I_e}_\a(p^s)}{p^s}-b_\a(p^e)\leq \mu (\a),
$$ 
because the sequence $\left\{ \frac{\nu^{I_e}_\a(p^s)}{p^s} \right\}_{e\in \NN}$ is non-decreasing, as $R$ is $F$-pure.
Hence
\[ 
0\leq \frac{ \nu^{I_e}_\a(p^s)}{p^s}-b_\a(p^e)\leq \mu (\a)
\] 
for all $e, s \in \NN$. We take the limit as $s \to \infty$ to get 
\[ 
0\leq c^{I_e}(\a)-b_\a(p^e)\leq \mu(\a),
\] and dividing this expression by $p^e$ gives 
\[ 
0\leq \frac{c^{I_e}(\a)}{p^e}-\frac{b_\a(p^e)}{p^e}\leq \frac{\mu(\a)}{p^e}.
\]
Taking the limit over $e$ gives the result. 
\end{proof}

The previous  result emulates a relation showed by Tucker \cite{TuckerFSig} between the Hilbert-Kunz multiplicity \cite{Monsky} and the $F$-signature \cite{SmithVDB,HLMCM,TuckerFSig}:
$$
s(R)=\lim\limits_{e\to\infty}\frac{\e_{HK}(I_e)}{p^{ed}}.
$$

As a corollary, we obtain a characterization of rings $(R,\m,K)$ for which $\fpt(\a)=c^\m(\a)$, for any ideal $\a \subseteq R$. This gives a first answer to Question \ref{Question Equality}. We study in more details the condition $\fpt(\m) = c^\m(\m)$, for standard graded algebras, in Section \ref{Sec_equality}.

\begin{corollary}\label{Cor fpt=c}
Let $(R,\m,K)$ and $\a \subseteq R$ be as in Theorem \ref{Main 1}. Then, $\fpt(\a)=c^\m(\a)$ if and only if $c^{I_e}(\a)=c^{\m^{[p^e]}}(\a)$ for all integers $e \in\NN$.
\end{corollary}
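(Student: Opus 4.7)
The plan is to deduce the corollary directly from Theorem \ref{Main 1} combined with Proposition \ref{Prop C} and the basic comparison between $I_e$ and $\m^{[p^e]}$. The first observation I would record is that $\m^{[p^e]} \subseteq I_e$ for every $e \in \NN$: indeed, if $f = \sum r_i x_i^{p^e}$ with $x_i \in \m$, then for any $R$-linear $\psi \colon F^e_*R \to R$ we have $\psi(F^e_* f) = \sum x_i \, \psi(F^e_* r_i) \in \m$, so $f \in I_e$. Combining this with Proposition \ref{Prop C}(a) and (b) gives
\[
c^{I_e}(\a) \leq c^{\m^{[p^e]}}(\a) = p^e \cdot c^\m(\a),
\]
so that $\frac{c^{I_e}(\a)}{p^e} \leq c^\m(\a)$ for every $e \in \NN$.

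The backward direction is then immediate: if $c^{I_e}(\a) = c^{\m^{[p^e]}}(\a)$ for all $e$, then $\frac{c^{I_e}(\a)}{p^e} = c^\m(\a)$ for every $e$, and Theorem \ref{Main 1} yields $\fpt(\a) = c^\m(\a)$ upon taking the limit.

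For the forward direction, I would use that the sequence $\left\{ \frac{c^{I_e}(\a)}{p^e} \right\}_{e \in \NN}$ is non-increasing, as shown in the proof of Theorem \ref{Main 1}. Since $I_0 = \m$, the initial term equals $c^\m(\a)$, and every subsequent term is bounded above by $c^\m(\a)$. If $\fpt(\a) = c^\m(\a)$, then the non-increasing sequence is bounded above by its first term $c^\m(\a)$ and converges to $c^\m(\a)$, so it must be constantly equal to $c^\m(\a)$. This gives $c^{I_e}(\a) = p^e \cdot c^\m(\a) = c^{\m^{[p^e]}}(\a)$ for all $e \in \NN$, as required.

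There is no real obstacle here; the corollary is essentially an observation that the monotone sequence in Theorem \ref{Main 1} is pinched between $\fpt(\a)$ and $c^\m(\a)$, and the equality of its two ends forces term-by-term equality. The only point that needs a brief justification is the containment $\m^{[p^e]} \subseteq I_e$, which I would include as a one-line remark at the start of the proof.
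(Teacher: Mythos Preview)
Your proof is correct and follows essentially the same approach as the paper's: both arguments exploit that $\left\{\frac{c^{I_e}(\a)}{p^e}\right\}_{e\in\NN}$ is non-increasing with first term $c^\m(\a)$ (since $I_0=\m$) and limit $\fpt(\a)$, so equality of the endpoints forces the sequence to be constant, and Proposition~\ref{Prop C}(b) converts this back into $c^{I_e}(\a)=c^{\m^{[p^e]}}(\a)$. Your explicit verification that $\m^{[p^e]}\subseteq I_e$ is correct but not strictly needed here, since the monotonicity of the sequence from its first term already yields the bound $\frac{c^{I_e}(\a)}{p^e}\leq c^\m(\a)$.
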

\begin{proof}
We first assume that $c^{I_e}(\a)=c^{\m^{[p^e]}}(\a)$ for all $e\in \NN$. Then, by Proposition \ref{Prop C} (b), $c^{I_e}(\a)=c^{\m^{[p^e]}}(\a)=p^e c^\m (\a)$, from which we have
\[\fpt(\a)=\lim_{e\to\infty}\frac{c^{I_e}(\a)}{p^e}=\lim_{e\to\infty}\frac{p^ec^{^{\m}}(\a)}{p^e}=c^{\m}(\a).\]

Assume now that $\fpt(\a)=c^\m(\a)$. Since $\left\{\frac{c^{I_e}(\a)}{p^e}\right\}_{e \in \NN}$ forms a non-increasing sequence, with first term $c^\m(\a)$ and limit $\fpt(\a)$, we have that the sequence is constant. Then, $\frac{c^{I_e}(\a)}{p^e}=\fpt(\a)=c^\m (\a)$ for all $e \in \NN$, and thus
$c^{I_e}(\a)=p^e c^\m(\a)=c^{\m^{[p^e]}}(\a)$.
\end{proof}

%%%%%%%%%%%%%%%%%%%%%%%%%%%%%%
\section{Limits of $a$-invariants for graded rings}
%%%%%%%%%%%%%%%%%%%%%%%%%%%%%%

In this section, we investigate the growth of the $a$-invariants of $R/J^{[p^e]}$ for a homogeneous ideal $J$ over a standard graded algebra $(R,\m,K)$. This study is motivated by the problem of bounding the Castelnuovo-Mumford regularity of $R/J^{[p^e]}$ \cite{KatzmanComplexityFrob,KatzmanZhang} which, in turn, is related to the localization of tight closure at one element \cite{KatzmanComplexityFrob}, the LC condition \cite{HunekeLC,HHLC}, and the discreteness of $F$-jumping numbers \cite{KatzmanZhang}.

\begin{definition}[\cite{GW1}] \label{def a invariant}
Let $(R,\m,K)$ be a standard graded $K$-algebra. Let $M$ be a non-zero $\frac{1}{p^e}\NN$-graded $R$-module. If $H^i_\m(M)\neq 0,$ we define the $i$-th $a$-invariant of $M$ by 
$$
a_i(M)=\sup\left\{s\in \frac{1}{p^e}\ZZ \left| \left[H^i_\m(M)\right]_{s}\neq 0 \right. \right\}.
$$
If $H^i_\m(M)= 0,$ we set $a_i(M)=-\infty$. We define the Castelnuovo-Mumford regularity of $M$ by $\reg(M) = \max\{a_i(M)+i \mid i \in \NN\}$.
\end{definition}

We first present lower bounds for $a$-invariants of $F$-pure rings modulo Frobenius powers of an ideal.

\begin{lemma}\label{lemma ineq frob be}
Let $(R,\m,K)$ be a standard graded $K$-algebra that is $F$-finite and $F$-pure. If $J\subseteq R$ is a homogeneous ideal and $i \in \NN$, then 
\[
\frac{a_i(R/J^{[p^s]})}{p^s}+\frac{b_\m(p^e)}{p^{e+s}}\leq \frac{a_i(R/J^{[p^{e+s}]})}{p^{e+s}}
\]
for all $e,s \in\NN$.
\end{lemma}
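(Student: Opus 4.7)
The plan is to use $F$-purity to build a graded split $R$-linear injection from $R/J^{[p^s]}$ into $F^e_*(R/J^{[p^{e+s}]})$ of degree $b_\m(p^e)/p^e$, and then apply $H^i_\m(-)$ to transfer the inequality to $a$-invariants. If $H^i_\m(R/J^{[p^s]})=0$, the inequality is trivial, so I may assume $a_i(R/J^{[p^s]})$ is finite.

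First I would extract a homogeneous splitting from the definition of $b_\m(p^e)$. Let $d := b_\m(p^e)$; by definition of $I_e$, there is a homogeneous element $f$ of degree $d$ with $f\notin I_e$. Using Remark \ref{Rem_split_I_e} together with the graded decomposition of $\Hom_R(F^e_*R,R)$, I can choose a homogeneous $R$-linear map $\psi\colon F^e_*R\to R$ of degree $-d/p^e$ (with respect to the $\frac{1}{p^e}\NN$-grading on $F^e_*R$) such that $\psi(F^e_*f)=1$.

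Next I would construct the dual map. Define $\phi_f\colon R\to F^e_*R$ by $\phi_f(r)=F^e_*(f\,r^{p^e})$; a direct check (using $r'\cdot F^e_*x = F^e_*(r'^{p^e}x)$) shows $\phi_f$ is $R$-linear, homogeneous of degree $d/p^e$, and that $\psi\circ\phi_f=\id_R$, so $\phi_f$ is a graded split injection. Tensoring with $R/J^{[p^s]}$ over $R$, and noting that $J^{[p^s]}\cdot F^e_*R=F^e_*((J^{[p^s]})^{[p^e]})=F^e_*(J^{[p^{e+s}]})$, yields a graded split $R$-linear injection
\[
\bar\phi_f\colon R/J^{[p^s]} \hookrightarrow F^e_*(R/J^{[p^{e+s}]})
\]
of degree $d/p^e$, explicitly $\bar r \mapsto F^e_*(\overline{f r^{p^e}})$.

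Finally I would push this to local cohomology. Functoriality of $H^i_\m(-)$ preserves split injections, and the standard identification (using the \v Cech complex on $x_1,\dots,x_n$ generating $\m$, together with $\sqrt{\m^{[p^e]}}=\sqrt{\m}$) gives a graded isomorphism $H^i_\m(F^e_*M)\cong F^e_*H^i_\m(M)$. Consequently I obtain a graded split $R$-linear injection
\[
H^i_\m(R/J^{[p^s]}) \hookrightarrow F^e_*H^i_\m(R/J^{[p^{e+s}]})
\]
of degree $d/p^e$. A nonzero homogeneous element of top degree $a_i(R/J^{[p^s]})$ in the source maps to a nonzero element of degree $a_i(R/J^{[p^s]})+d/p^e$ in the target (in the $\frac{1}{p^e}\ZZ$-grading), which corresponds to a nonzero element of degree $p^e a_i(R/J^{[p^s]})+b_\m(p^e)$ in $H^i_\m(R/J^{[p^{e+s}]})$; hence $a_i(R/J^{[p^{e+s}]})\geq p^e a_i(R/J^{[p^s]})+b_\m(p^e)$, and dividing by $p^{e+s}$ gives the claim. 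The main technical care needed is bookkeeping of gradings through the two rescalings (tensor with $R/J^{[p^s]}$ and passage to local cohomology), and in particular making sure the splitting can be chosen homogeneous of degree exactly $-d/p^e$; everything else is functorial.
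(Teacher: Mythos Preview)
Your proof is correct and is essentially the paper's argument: the paper packages your explicit split injection $\phi_f\colon R\to F^e_*R$ as the graded direct summand decomposition $F^e_*R \cong R(-b_e)\oplus M_e$ with $b_e=b_\m(p^e)/p^e$, then tensors with $R/J^{[p^s]}$ and applies $H^i_\m$ exactly as you do. The only difference is packaging---you build the summand by hand via $f$ and $\psi$, while the paper quotes it from the remark characterizing $b_\m(p^e)/p^e$ as the top degree of a free generator of $F^e_*R$.
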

\begin{proof}
For every $e \in \NN$, let $b_e=\frac{b_\m(p^e)}{p^e}$. By Remark \ref{Rem Diag Deg Gen}, there exists a $\frac{1}{p^e}\NN$ graded $R$-module, $M_e$, such that
$F^e_*R \cong R(-b_e)\oplus M_e$ as  $\frac{1}{p^e}\NN$ graded $R$-modules.  Let $s$ be a non-negative integer. Applying the functor $- \otimes_R R/J^{[p^s]}$ to the homogeneous split inclusion $R(-b_e) \hookrightarrow F^e_*R$, we obtain that $ \frac{R}{J^{[p^s]}}(-b_e)$ is a direct summand of $\frac{F^e_*R}{J^{[p^s]}F^e_*R}$. Consequently, $H^i_\m(\frac{R}{J^{[p^s]}}(-b_e))$ splits out of $H^i_\m(\frac{F^e_*R}{J^{[p^s]}F^e_*R})$.  Looking at graded components, we conclude
\[
a_i\left(\frac{R}{J^{[p^s]}}\right)+\frac{b_\m(p^e)}{p^{e}}\leq a_i\left(\frac{F^e_*R}{J^{[p^s]}F^e_*R}\right)=\frac{a_i(R/J^{[p^{e+s}]})}{p^{e}}
\]
for all $e,s \in \NN$, where the last step follows from the fact that $\ds \frac{F^e_*R}{J^{[p^s]}F^e_*R} \cong F^{e}_*\left(\frac{R}{J^{[p^{e+s}]}}\right)$. 
Our claim follows after dividing by $p^s.$
\end{proof}

\begin{theorem}\label{Thm Limit a-inv}
Let $(R,\m,K)$ be a standard graded $K$-algebra that is $F$-finite and $F$-pure. Suppose that $J\subseteq R$ is a homogeneous ideal, and let $i \in \NN$. If there exists a constant $C$ such that $a_i(R/J^{[p^e]})\leq Cp^e$ for all $e \in \NN$, then either $H^i_\m(R/J^{[p^s]}) = 0$ for all $s \in \NN$, or
\[
\lim\limits_{e\to \infty} \frac{a_i(R/J^{[p^e]})}{p^e}
\]
exists. Furthermore, we have inequalities
\[
a_i(R/J) + \fpt(\m) \leq \max_{s \in\NN} \left\{\frac{a_i(R/J^{[p^s]})}{p^s}+\frac{\fpt(\m)}{p^s}\right\} \leq \lim\limits_{e\to \infty} \frac{a_i(R/J^{[p^e]})}{p^e}.
\]

\end{theorem}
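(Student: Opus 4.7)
The plan is to extract everything from the fundamental inequality provided by Lemma \ref{lemma ineq frob be},
\[
\frac{a_i(R/J^{[p^s]})}{p^s}+\frac{b_\m(p^e)}{p^{e+s}}\leq \frac{a_i(R/J^{[p^{e+s}]})}{p^{e+s}},
\]
together with the defining fact that $b_\m(p^e)/p^e \to \fpt(\m)$ and the hypothesized upper bound $a_i(R/J^{[p^e]}) \leq Cp^e$. First, I would dispose of the dichotomy: if $H^i_\m(R/J^{[p^{s_0}]}) \neq 0$ for some $s_0$, then $a_i(R/J^{[p^{s_0}]})$ is a finite integer, and substituting $s = s_0$ into the displayed inequality gives a finite lower bound on $a_i(R/J^{[p^{s_0+e}]})/p^{s_0+e}$ for every $e$. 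Hence $H^i_\m(R/J^{[p^e]}) \neq 0$ for all $e \geq s_0$, so after shifting indices we may assume that $\alpha_e := a_i(R/J^{[p^e]})/p^e$ is real-valued and bounded above by $C$ for all $e$. Otherwise $H^i_\m(R/J^{[p^s]}) = 0$ for every $s$, and we are in the first case of the theorem.

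Next, I would establish the existence of the limit by squeezing $\liminf$ and $\limsup$ together. Fixing $s$ and letting $e \to \infty$ in the displayed inequality, and using $b_\m(p^e)/p^{e+s} = (b_\m(p^e)/p^e)/p^s \to \fpt(\m)/p^s$, I would obtain
\[
\liminf_{n \to \infty} \alpha_n \;\geq\; \alpha_s + \frac{\fpt(\m)}{p^s} \qquad \text{for every } s \in \NN.
\]
Since $\fpt(\m) \geq 0$, this yields
\[
\liminf_{n \to \infty} \alpha_n \;\geq\; \sup_{s \in \NN}\Bigl(\alpha_s + \frac{\fpt(\m)}{p^s}\Bigr) \;\geq\; \sup_{s \in \NN} \alpha_s \;\geq\; \limsup_{n \to \infty} \alpha_n.
\]
Combined with the trivial $\liminf \leq \limsup$, all three quantities coincide, so $\lim_{e \to \infty} \alpha_e$ exists and is finite.

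The asserted pair of inequalities then follows immediately: the left inequality is the $s = 0$ specialization $a_i(R/J) + \fpt(\m) = \alpha_0 + \fpt(\m)/p^0$ inside the supremum, and the right inequality is exactly the estimate $\sup_s (\alpha_s + \fpt(\m)/p^s) \leq \liminf_n \alpha_n = \lim_e \alpha_e$ established above. The only subtlety I anticipate is bookkeeping around the value $-\infty$ for $a$-invariants when some (but not all) local cohomologies vanish; the dichotomy step above handles this cleanly, and the remainder is a routine manipulation of the squeeze inequality.
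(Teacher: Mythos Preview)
Your proposal is correct and follows essentially the same approach as the paper: both invoke Lemma~\ref{lemma ineq frob be}, pass to the limit in $e$ with $s$ fixed to obtain $\alpha_s + \fpt(\m)/p^s \leq \liminf_n \alpha_n$, and then squeeze $\liminf$ against $\limsup$. The only cosmetic difference is that the paper takes $\limsup_s$ of this inequality (using $\fpt(\m)/p^s \to 0$) rather than first dropping the nonnegative term $\fpt(\m)/p^s$ and taking $\sup_s$ as you do; your handling of the vanishing/nonvanishing dichotomy is also slightly more explicit than the paper's.
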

\begin{proof}
Assume that $H^i_\m(R/J^{[p^e]}) \ne 0$ for some $e \in\NN$. By Lemma \ref{lemma ineq frob be}, for any $s \in\NN$ we have 
\begin{equation} \tag{1} \label{ineq_1}
\ds \frac{a_i(R/J^{[p^s]})}{p^s} + \frac{\fpt(\m)}{p^s} \leq \liminf\limits_{e\to \infty} \frac{a_i(R/J^{[p^{e+s}]})}{p^{e+s}} = \liminf\limits_{e \to \infty} \frac{a_i(R/J^{[p^e]})}{p^e}.
\end{equation}

In particular, since there exists $e \in\NN$ for which $a_i(R/J^{[p^e]}) > - \infty$, we have that $\liminf\limits_{e\to \infty} \frac{a_i(R/J^{[p^e]})}{p^e}$ is finite. By assumption, we have that $a_i(R/J^{[p^e]})\leq Cp^e$ for all $e \in \NN$. Therefore, $\limsup\limits_{e\to \infty} \frac{a_i(R/J^{[p^e]})}{p^e}$ is also finite. Taking limsup with respect to $s$ in (\ref{ineq_1}) gives 
\[
\limsup\limits_{s\to \infty}\frac{a_i(R/J^{[p^{s}]})}{p^{s}}\leq \liminf\limits_{e \to \infty}\frac{a_i(R/J^{[p^{e}]})}{p^{e}},
\]
which implies that the limit exists. The last claim now follows from the fact that
\[
\ds \frac{a_i(R/J^{[p^s]})}{p^s} + \frac{\fpt(\m)}{p^s} \leq \lim\limits_{e\to \infty} \frac{a_i(R/J^{[p^{e+s}]})}{p^{e+s}} = \lim_{e \to \infty} \frac{a_i(R/J^{[p^e]})}{p^e}
\]
for all $s \in\NN$, by Lemma \ref{lemma ineq frob be}.
\end{proof}

\begin{theorem}\label{Thm Limit reg-inv}
Let $(R,\m,K)$ be a standard graded $K$-algebra that is $F$-finite and $F$-pure. Suppose that $J\subseteq R$ is a homogeneous ideal.
If  there exists a constant $C$ such that $\reg(R/J^{[p^e]})\leq Cp^e$ for all $e \in \NN$, 
then 
\[
\lim\limits_{e\to \infty} \frac{\reg(R/J^{[p^e]})}{p^e}.
\]
exists, and it is bounded below by
$\max\{a_i(R/J) \ | \ i \in \NN\}+\fpt(\m).$ 
\end{theorem}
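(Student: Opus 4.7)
The plan is to reduce the statement to Theorem \ref{Thm Limit a-inv} by decomposing the regularity into its constituent $a$-invariants, using that only finitely many $a_i$ can contribute.

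First, I would observe that for every $i \in \NN$ and every $e \in \NN$, we have $a_i(R/J^{[p^e]}) \leq \reg(R/J^{[p^e]}) - i \leq Cp^e$, so the hypothesis of Theorem \ref{Thm Limit a-inv} is satisfied for each fixed $i$. Moreover, since $\sqrt{J^{[p^e]}} = \sqrt{J}$, the dimension $\dim(R/J^{[p^e]}) = \dim(R/J) =: t$ is independent of $e$, so $H^i_\m(R/J^{[p^e]}) = 0$ whenever $i > t$. Hence only finitely many values $i \in \{0,1,\ldots,t\}$ can possibly contribute either to $\reg(R/J^{[p^e]})$ for any $e$, or to $\max_i \{a_i(R/J)\}$.

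Next, I would apply Theorem \ref{Thm Limit a-inv} to each $i$ in this finite range. For each such $i$, either $H^i_\m(R/J^{[p^e]}) = 0$ for all $e$ (in which case $a_i(R/J) = -\infty$ as well, and the inequality $\lim_e a_i(R/J^{[p^e]})/p^e \geq a_i(R/J) + \fpt(\m)$ holds vacuously if we set the limit to be $-\infty$), or the limit $L_i := \lim_{e \to \infty} a_i(R/J^{[p^e]})/p^e$ exists and satisfies $L_i \geq a_i(R/J) + \fpt(\m)$.

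Finally, I would combine these limits through the identity
\[
\frac{\reg(R/J^{[p^e]})}{p^e} \;=\; \max_{0 \leq i \leq t}\left\{\frac{a_i(R/J^{[p^e]})}{p^e} + \frac{i}{p^e}\right\}.
\]
Since the maximum is over a finite set independent of $e$, and the summand $i/p^e$ tends to $0$, the limit in $e$ exists and equals $\max_{0 \leq i \leq t} L_i$. Taking the max of the lower bounds $L_i \geq a_i(R/J) + \fpt(\m)$ over $i$ then yields
\[
\lim_{e \to \infty} \frac{\reg(R/J^{[p^e]})}{p^e} \;\geq\; \max_{i \in \NN}\{a_i(R/J)\} + \fpt(\m),
\]
which is the desired bound. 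The only genuine subtlety is bookkeeping the case $L_i = -\infty$, which is handled by the convention $a_i(R/J) = -\infty$ in that range; since some $a_i(R/J)$ is finite (e.g.\ $i = t$, as $R/J$ is a nonzero finitely generated graded module), the right-hand side is finite and the argument goes through. The bulk of the work has been absorbed into Theorem \ref{Thm Limit a-inv}, so I do not expect a significant technical obstacle here.
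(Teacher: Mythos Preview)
Your proposal is correct and follows essentially the same approach as the paper: both reduce to Theorem~\ref{Thm Limit a-inv} for each $i$, use that only finitely many indices contribute, and then commute the limit with the finite maximum. The paper phrases the finite index set as $\mathcal{I}=\{i\mid H^i_\m(R/J^{[p^s]})\ne 0\text{ for some }s\}$ rather than $\{0,\dots,t\}$, but this is only a cosmetic difference in bookkeeping.
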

\begin{proof}
Let $\mathcal{I} = \{i \in \NN \mid H^i_\m(R/J^{[p^s]}) \ne 0$ for some $s \in \NN\}$. Note that, for all $e \in \NN$, we have $\reg(R/J^{[p^e]}) =  \max_{i \in \mathcal{I}} \{a_i(R/J^{[p^e]}) + i\}$. In addition, since $a_i(R/J^{[p^e]})\leq \reg(R/J^{[p^e]})\leq Cp^e$ for all $e \gg 0$, by Theorem \ref{Thm Limit a-inv} we obtain that  $\lim\limits_{e\to \infty} \frac{a_i(R/J^{[p^e]})}{p^e}$ exists for all $i \in \mathcal{I}$. We have that
\begin{align*}
\lim\limits_{e\to \infty} \frac{\reg(R/J^{[p^e]})}{p^e}
& =\lim\limits_{e\to \infty} \frac{\max_{i \in \mathcal{I}}\{a_i(R/J^{[p^e]})+i\} }{p^e}\\
& =\lim\limits_{e\to \infty}\max_{i \in \mathcal{I}} \left\{ \frac{a_i(R/J^{[p^e]})+i }{p^e}\right\}\\
& =\max_{i \in \mathcal{I}} \left\{\lim\limits_{e\to \infty} \frac{a_i(R/J^{[p^e]})+i }{p^e}\right\}\\
& =\max_{i \in \mathcal{I}} \left\{\lim\limits_{e\to \infty} \frac{a_i(R/J^{[p^e]}) }{p^e}\right\}.
\end{align*}
Therefore $\lim_{e \to \infty} \frac{\reg(R/J^{[p^e]})}{p^e}$ exists. The claim about the lower bound follows from the inequality in Theorem \ref{Thm Limit a-inv}.
\end{proof}

In Theorems \ref{ThmRegDimR/J_t} and \ref{ThmRegDimR/J_t-1} below, we recover linear upper bounds for $a_i(R/J^{[p^e]})$ when $i \geq \dim(R/J)-1$. This type of bound has already been discovered by  Zhang \cite[Corollary 1.3]{ZhangRegFrob}. However, our proof is more direct and does not make use of spectral sequences. In addition, we prove the existence of  $\lim\limits_{e \to \infty} \frac{a_i(R/J^{[p^e]})}{p^e}$ for $i\geq \dim(R/J)-1$, and we obtain specific lower and upper bounds for the limits. 

\begin{remark}[{\cite[Remark 4.8]{DSNBFpurity}}]\label{Rem Diag Deg Gen}
If $(R,\m,K)$ is a standard graded $F$-pure $K$-algebra, then $\frac{\nu^\m_\m(p^e)}{p^e}$ is the highest possible degree of a minimal homogeneous generator of $F^e_*R$, with the natural $\frac{1}{p^e}\NN$ grading. More specifically, we have that
$$
\frac{\nu^\m_\m(p^e)}{p^e}=\sup\left\{s\in\frac{1}{p^e}\NN \ \bigg| \left[ \frac{F^e_*R}{\m F^e_*R} \right]_{s}\neq 0\right\}.
$$
\end{remark}

\begin{lemma}\label{lemma ineq frob nu}
Let $(R,\m,K)$ be an $F$-finite standard graded $K$-algebra. Let $J\subseteq R$ be a homogeneous ideal, and let $t=\dim(R/J)$.
Then, 
$$
\frac{a_t(R/J^{[p^e]})}{p^e}
\leq a_t(R/J)+\frac{\nu^\m_\m(p^e)}{p^e}
$$
for every $e\in\NN.$
\end{lemma}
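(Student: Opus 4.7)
The plan is to use the characterization of $\nu^\m_\m(p^e)/p^e$ from Remark~\ref{Rem Diag Deg Gen} (restated right before the lemma) as the top degree of a minimal homogeneous generator of $F^e_*R$ in the $\frac{1}{p^e}\NN$-grading, and then propagate this through a right-exact top local cohomology functor.

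First, I would record that since $J$ and $J^{[p^e]}$ share the same radical, $\dim(R/J)=\dim(R/J^{[p^e]})=\dim(F^e_*(R/J^{[p^e]}))=t$, so $t$ is the largest index for which $H^i_\m$ can be nonzero on any of these modules. Next, by the remark, there is a $\frac{1}{p^e}\ZZ$-graded surjection
\[
\bigoplus_i R\!\left(-\gamma_i\right) \twoheadrightarrow F^e_*R,
\]
where every $\gamma_i \leq \nu^\m_\m(p^e)/p^e$, obtained by picking a minimal set of homogeneous generators of $F^e_*R$.

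Applying $-\otimes_R R/J$ preserves surjectivity and yields
\[
\bigoplus_i (R/J)\!\left(-\gamma_i\right) \twoheadrightarrow F^e_*R/J F^e_*R \cong F^e_*\!\left(R/J^{[p^e]}\right).
\]
Now I would apply $H^t_\m(-)$. Since $t$ equals the dimension of every module in sight, $H^{t+1}_\m$ vanishes on these modules, so the long exact sequence coming from a short exact sequence of kernels and images gives that $H^t_\m$ is right exact on this surjection. Hence we obtain a $\frac{1}{p^e}\ZZ$-graded surjection
\[
\bigoplus_i H^t_\m(R/J)\!\left(-\gamma_i\right) \twoheadrightarrow H^t_\m\!\left(F^e_*(R/J^{[p^e]})\right).
\]

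To finish, I would compare top nonzero graded pieces. The top degree of the source is $\max_i\{a_t(R/J)+\gamma_i\} \leq a_t(R/J) + \nu^\m_\m(p^e)/p^e$, while the top degree of the target, in the $\frac{1}{p^e}\ZZ$-grading, equals $a_t(R/J^{[p^e]})/p^e$ (using the standard identification $H^t_\m(F^e_*M)=F^e_*H^t_\m(M)$ and the rescaling of degrees by $1/p^e$). Surjectivity forces
\[
\frac{a_t(R/J^{[p^e]})}{p^e} \leq a_t(R/J) + \frac{\nu^\m_\m(p^e)}{p^e},
\]
as desired. The only mildly delicate step is the right-exactness of $H^t_\m$ on the given surjection, which I would justify via the vanishing $H^{t+1}_\m=0$ on all modules involved; once that is in place, the rest is a bookkeeping of degrees in the $\frac{1}{p^e}\ZZ$-graded setting.
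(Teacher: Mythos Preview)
Your proposal is correct and follows essentially the same argument as the paper: choose a minimal graded generating set of $F^e_*R$, tensor the resulting free presentation with $R/J$, and use right-exactness of $H^t_\m$ (via Grothendieck vanishing on the kernel, an $R/J$-module of dimension at most $t$) to compare top degrees. If anything, you are slightly more explicit than the paper in justifying why the induced map on $H^t_\m$ is surjective.
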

\begin{proof}
Let $u_1,\ldots,u_\ell$ be a minimal set of homogeneous generators for $F^e_*R$, with degrees $\gamma_1\leq \ldots\leq \gamma_\ell.$
We note that $\gamma_\ell =\frac{\nu^\m_\m(p^e)}{p^e}$ by Remark \ref{Rem Diag Deg Gen}.
Consider the homogeneous surjection $\bigoplus R(-\gamma_i)\twoheadrightarrow  F^e_*R$ obtained from this minimal generating set. Since tensor product is right exact, we have a surjection $\bigoplus R/J(-\gamma_i)\twoheadrightarrow  F^e_*R/JF^e_*R$ of $R/J$-modules that, in turn, induces a surjective map
$\bigoplus H^t_\m ( R/J(-\gamma_i))\twoheadrightarrow  H^t_\m ( F^e_*R/J F^e_*R)$. We note that $H^t_\m(R/J)\neq 0$. 
Then,
$$
\frac{a_t(R/J^{[p^e]})}{p^e}= a_t\left(\frac{F^e_*R}{JF^e_*R}\right)\leq a_t\left(R/J\right)+\gamma_\ell,$$
where the first step follows from the fact that $\ds\frac{F^e_*R}{JF^e_*R}\cong F^e_*\left(\frac{R}{J^{[p^e]}}\right)$.
\end{proof}

\begin{theorem}\label{ThmRegDimR/J_t}
Let $(R,\m,K)$ be an $F$-finite standard graded $K$-algebra. Let $J\subseteq R$ be a homogeneous ideal, and let $t=\dim(R/J)$.
Then, there exists a constant $A$ %and $B$ 
such that $a_t(R/J^{[p^e]})\leq A p^e$  for all $e \in \NN$. In addition, if $R$ is $F$-pure, the limit
\[
\lim\limits_{e\to \infty} \frac{a_t(R/J^{[p^e]})}{p^e} \hspace{1cm} 
\]
exists.  Moreover, we have that
\[
a_t(R/J)+\fpt(\m)\leq \lim\limits_{e\to \infty} \frac{a_t(R/J^{[p^e]})}{p^e}\leq a_t(R/J)+ c^\m(\m).
\]
\end{theorem}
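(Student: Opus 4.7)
\textbf{Proof proposal for Theorem \ref{ThmRegDimR/J_t}.} My plan is to derive the whole statement by combining Lemma \ref{lemma ineq frob nu} (which supplies an upper bound in terms of $\nu^\m_\m(p^e)$) with Theorem \ref{Thm Limit a-inv} (which supplies the existence of the limit and the lower bound in terms of $\fpt(\m)$, under $F$-purity). No new inductive argument on spectral sequences or on the ideal $J$ is needed; the two previous lemmas already carry all the structural content.

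First I would establish the bound $a_t(R/J^{[p^e]}) \leq A p^e$. Lemma \ref{lemma ineq frob nu} gives
\[
\frac{a_t(R/J^{[p^e]})}{p^e} \leq a_t(R/J) + \frac{\nu^\m_\m(p^e)}{p^e}
\]
for every $e \in \NN$. By Theorem \ref{Existence c}, $\nu^\m_\m(p^e)/p^e$ converges to $c^\m(\m)$, so it is bounded. Since $t = \dim(R/J)$, Grothendieck's nonvanishing theorem yields $H^t_\m(R/J) \neq 0$, hence $a_t(R/J)$ is finite; taking $A$ to be $a_t(R/J)$ plus any uniform bound on $\nu^\m_\m(p^e)/p^e$ does the job.

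For the existence of the limit under $F$-purity, I would apply Theorem \ref{Thm Limit a-inv} with $i = t$ and $C = A$. The hypothesis $a_t(R/J^{[p^e]}) \leq C p^e$ is exactly what was proved in the previous step, and the condition $H^t_\m(R/J^{[p^s]}) \neq 0$ for some $s$ is met already at $s = 0$ by Grothendieck nonvanishing (using that $\dim(R/J^{[p^s]}) = \dim(R/J) = t$ since $J$ and $J^{[p^s]}$ have the same radical). Thus the limit exists, and Theorem \ref{Thm Limit a-inv} also hands us the lower bound
\[
a_t(R/J) + \fpt(\m) \leq \lim_{e \to \infty} \frac{a_t(R/J^{[p^e]})}{p^e}.
\]

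Finally, for the matching upper bound, I would simply pass to the limit as $e \to \infty$ in the inequality of Lemma \ref{lemma ineq frob nu}; the right hand side converges to $a_t(R/J) + c^\m(\m)$ by Theorem \ref{Existence c}, yielding
\[
\lim_{e \to \infty} \frac{a_t(R/J^{[p^e]})}{p^e} \leq a_t(R/J) + c^\m(\m).
\]
There is no real obstacle here: the only subtlety is verifying that $a_t(R/J)$ is finite and that $\nu^\m_\m(p^e)/p^e$ is bounded, both of which are essentially immediate from material already in the paper.
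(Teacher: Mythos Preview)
Your proposal is correct and follows essentially the same route as the paper: both invoke Lemma \ref{lemma ineq frob nu} together with the convergence of $\nu^\m_\m(p^e)/p^e$ (Theorem \ref{Existence c}) to produce the linear bound, then apply Theorem \ref{Thm Limit a-inv} for existence of the limit and the lower bound, and finally pass to the limit in Lemma \ref{lemma ineq frob nu} to obtain the upper bound. Your explicit invocation of Grothendieck nonvanishing to verify $H^t_\m(R/J)\neq 0$ (needed both for finiteness of $a_t(R/J)$ and for the nonvanishing hypothesis in Theorem \ref{Thm Limit a-inv}) is a helpful clarification that the paper leaves implicit.
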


\begin{proof} 
By Lemma \ref{lemma ineq frob nu}, for all $e \in \NN$ we have an inequality
\[
\frac{a_t(R/J^{[p^e]})}{p^e}
\leq a_t(R/J)+\frac{\nu^\m_\m(p^e)}{p^e}.
\]
Since $\{\frac{\nu^\m_\m(p^e)}{p^e}\}_{e\in\NN}$ converges by Theorem \ref{Existence c}, the sequence is bounded. Let $A'$ be any upper bound. If we let $A=A'+a_t(R/J)$, then $a_t(R/J^{[p^e]})\leq A p^e$ for all $e \in \NN$, as desired.
If $R$ is assumed to be $F$-pure, then $\lim\limits_{e\to \infty} \frac{a_t(R/J^{[p^e]})}{p^e}$ exists by Theorem \ref{Thm Limit a-inv}. In addition, Theorem \ref{Thm Limit a-inv} and Lemma \ref{lemma ineq frob nu} yield the following inequalities
\[
\ds  a_t(R/J)+\fpt(\m)\leq \lim\limits_{e\to \infty} \frac{a_t(R/J^{[p^e]})}{p^e}\leq a_t(R/J)+ c^\m(\m).
\]
\end{proof}

\begin{theorem} \label{ThmRegDimR/J_t-1} Let $(R,\m,K)$ be an $F$-finite standard graded $K$-algebra. Let $J\subseteq R$ be a homogeneous ideal, and let $t=\dim(R/J)$. Then, there exists a constant $B$ such that $a_{t-1}(R/J^{[p^e]})\leq B p^e$ for all $e\in \NN$.
In addition, if $R$ is $F$-pure and $H^{t-1}_\m(R/J^{[p^e]}) \ne 0$ for some $e \in \NN$, then 
\[
\lim\limits_{e\to \infty} \frac{a_{t-1}(R/J^{[p^e]})}{p^e}
\]
exists. Furthermore, we have that
\[
a_{t-1}(R/J)+\fpt(\m)\leq \lim\limits_{e\to \infty} \frac{a_{t-1}(R/J^{[p^e]})}{p^e}\leq D(c^J(J)+1),
\]
where $D=\max\{t \in \NN \;|\;\left[ J/\m J\right]_t \neq 0\}+1$.
\end{theorem}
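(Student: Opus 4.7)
The plan is to mirror the structural approach of Theorem \ref{ThmRegDimR/J_t}, but with modifications to handle the second-from-top local cohomology $H^{t-1}_\m$. The starting point is the inclusion $J^{k_e} \subseteq J^{[p^e]}$ for $k_e := \nu^J_J(p^e)+1$, which by Lemma \ref{Bound of Ve} grows linearly in $p^e$ and satisfies $k_e/p^e \to c^J(J)$. This yields the short exact sequence
\[
0 \to J^{[p^e]}/J^{k_e} \to R/J^{k_e} \to R/J^{[p^e]} \to 0,
\]
and the induced long exact sequence in local cohomology gives the key inequality
\[
a_{t-1}(R/J^{[p^e]}) \leq \max\{a_{t-1}(R/J^{k_e}),\, a_t(J^{[p^e]}/J^{k_e})\}.
\]

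To establish the linear bound $a_{t-1}(R/J^{[p^e]}) \leq Bp^e$ (which requires only the $F$-finiteness hypothesis), I would combine classical Cutkosky--Herzog--Trung / Kodiyalam-type regularity bounds of the shape $\reg(R/J^{k_e}) \leq (D-1)k_e + C$ for the first term, with a complementary analysis of the submodule $J^{[p^e]}/J^{k_e}$ of $R/J^{k_e}$ via a further long exact sequence. Once the linear bound is in place, the $F$-pure hypothesis together with $H^{t-1}_\m(R/J^{[p^e]}) \neq 0$ for some $e$ places us exactly in the setting of Theorem \ref{Thm Limit a-inv} with $i = t-1$; this immediately gives both the existence of the limit and the lower bound $a_{t-1}(R/J)+\fpt(\m)$.

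The main obstacle is the explicit upper bound $D(c^J(J)+1)$. Dividing the regularity bound by $p^e$ and letting $e \to \infty$, the contribution from $a_{t-1}(R/J^{k_e})$ is asymptotically $(D-1)c^J(J)$, which falls short of the target by $c^J(J)+D$. Supplying this missing slack requires a sharp, noncircular bound on $a_t(J^{[p^e]}/J^{k_e})/p^e$, and this is the delicate step: a naive application of the long exact sequence only bounds $a_t(J^{[p^e]}/J^{k_e})$ in terms of $a_{t-1}(R/J^{[p^e]})$ itself, leading to circularity. I would attempt to resolve this by analyzing a direct free presentation of $J^{[p^e]}/J^{k_e}$ over $R$: its generators $\overline{f_i^{p^e}}$ sit in degrees at most $(D-1)p^e$, and the interaction of these shifts with the filtration structure imposed by $k_e$ should absorb the extra factor of $D$ and the $+1$ constants appearing in the target upper bound.
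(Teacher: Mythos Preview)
Your approach is essentially the paper's. The circularity you worry about does not arise: one bounds $a_t(J^{[p^e]}/J^{k_e})$ directly by presenting this module over $R/J^{k_e}$ (not over $R$). Since it is generated by the images of $f_j^{p^e}$, there is a degree-zero surjection
\[
\bigoplus_j \frac{R}{J^{k_e}}(-d_jp^e)\ \twoheadrightarrow\ \frac{J^{[p^e]}}{J^{k_e}},
\]
and because $t=\dim(R/J^{k_e})$ the functor $H^t_\m$ is right exact here, giving
\[
a_t\!\left(J^{[p^e]}/J^{k_e}\right)\ \leq\ a_t\!\left(R/J^{k_e}\right)+(D-1)p^e.
\]
The Trung--Wang regularity bound for ordinary powers then controls $a_t(R/J^{k_e})$ by $(D-1)k_e+C$, and since $k_e/p^e\to c^J(J)$ in the $F$-pure case (where $\nu^J_J(p^e)/p^e\leq c^J(J)$), dividing by $p^e$ and passing to the limit yields, after a harmless overestimate $(D-1)\leq D$, exactly $D(c^J(J)+1)$. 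The only cosmetic difference is that the paper replaces your variable $k_e$ by a fixed $B'p^e+1$ with $B'\geq \nu^J_J(p^e)/p^e$ for the first part of the proof, and sets $B'=c^J(J)$ in the $F$-pure part; your choice works just as well.
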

\begin{proof}
Since $\{\frac{\nu^J_J(p^e)}{p^e}\}_{e\in\NN}$ converges by Theorem \ref{Existence c}, there exists an integer $B'$ such that $\nu^J_J(p^e)\leq B'p^e$ for all $e \in \NN$, yielding inclusions $J^{B'p^e+1} \subseteq J^{\nu^J_J(p^e)+1} \subseteq J^{[p^e]}$. For each $e \in \NN$, the short exact sequence
\[
\xymatrixcolsep{5mm}
\xymatrixrowsep{2mm}
\xymatrix{
0 \ar[r] & \ds J^{[p^e]}/J^{B'p^e+1} \ar[r] & R/J^{B'p^e+1} \ar[r] & R/J^{[p^e]} \ar[r] & 0
}
\]
induces the following exact sequence in local cohomology
\[
\xymatrixcolsep{5mm}
\xymatrixrowsep{2mm}
\xymatrix{
H^{t-1}_\m(R/J^{B'p^e+1}) \ar[r] & H^{t-1}_\m(R/J^{[p^e]}) \ar[r] & H^t_\m(J^{[p^e]}/J^{B'p^e+1}).
}
\]
For ordinary powers of an ideal, an explicit linear upper bound for the Castelnuovo-Mumford regularity in terms of the degree of minimal generators is known \cite[Theorem 3.2]{TrungWang} (see also \cite{CHT,Kodiyalam}). As a consequence of \cite[Theorem 3.2]{TrungWang}, there exists a constant $C$ such that $a_{t-1}(R/J^{B'p^e+1}) \leq D'(B'p^e+1)+C$ for all $e \gg 0$, where $D'=D-1$ is the maximal degree of a minimal homogeneous generator of $J$ . Since $B'p^e+1 \geq C$ for $e$ large enough, we deduce that $a_{t-1}(R/J^{B'p^e+1}) \leq D(B'p^e+1)$ for all $e \gg 0$. For a given $e \in \NN$, if $\dim(J^{[p^e]}/J^{B'p^e+1})<t$ then $H^t_\m(J^{[p^e]}/J^{B'p^e+1}) = 0$, and we obtain 
\[
\ds a_{t-1}(R/J^{[p^e]}) \leq a_{t-1}(R/J^{B'p^e+1}) \leq D(B'p^e+1).
\]
On the other hand, if $\dim(J^{[p^e]}/J^{B'p^e+1}) = t$ and $e \gg 0$, the short exact sequence above gives
\begin{align*} \label{ineq1} \tag{1}
 a_{t-1}(R/J^{[p^e]})& \leq \max\{ a_{t-1}(R/J^{B'p^e+1}), a_t(J^{[p^e]}/J^{B'p^e+1}) \} \\
& \leq \max\{D(B'p^e+1),a_t(J^{[p^e]}/J^{B'p^e+1})\}.
\end{align*}
We claim that the inequality $a_t(J^{[p^e]}/J^{B'p^e+1}) \leq D(B'+1)p^e+Dp^e$ holds true for all $e \gg 0$. Let $f_1,\ldots,f_s$ be minimal homogeneous generators of $J$, of degrees $d_1,\ldots,d_s$. Note that $D=\max\{d_j \;|\; j=1,\ldots,s\} +1$. In addition, note that $J^{[p^e]}/J^{B'p^e+1}$ is a $R/J^{B'p^e+1}$ module generated by the residue classes of $f_1^{p^e},\ldots,f_s^{p^e}$. We have the following surjection, that is homogeneous of degree zero: 
\[
\xymatrixcolsep{5mm}
\xymatrixrowsep{2mm}
\xymatrix{
\ds \bigoplus_{j=1}^s \frac{R}{J^{B'p^e+1}}(-d_jp^e) \ar@{->>}[r] & \ds \frac{J^{[p^e]}}{J^{B'p^e+1}}.
}
\]
Taking local cohomology we obtain a surjection 
\[
\xymatrixcolsep{5mm}
\xymatrixrowsep{2mm}
\xymatrix{
\ds \bigoplus_{j=1}^s H^t_\m\left( \frac{R}{J^{B'p^e+1}}(-d_jp^e) \right) \ar@{->>}[r] & \ds H^t_\m\left( \frac{J^{[p^e]}}{J^{B'p^e+1}}\right)
}
\]
which, in turn, gives
\begin{align*} \label{ineq2} \tag{2}
\ds a_t(J^{[p^e]}/J^{B'p^e+1}) & \leq \max\{a_t(R/J^{B'p^e+1})+d_jp^e \mid 1 \leq  j \leq s \} \\ 
& \leq D(B'p^e+1)+Dp^e,
\end{align*}
as claimed. Putting (\ref{ineq1}) and (\ref{ineq2}) together, we conclude that there exists $e_0 \in \NN$ such that $a_{t-1}(R/J^{[p^e]}) \leq D(B'p^e+1)+Dp^e \leq D(B'+2)p^e$ for all $e > e_0$. Taking 
\[
\ds B=\max\left\{D(B'+2),\frac{a_{t-1}(R/J^{[p^s]})}{p^s} \mid 0 \leq s \leq e_0\right\}
\]
we finally obtain that $a_{t-1}(R/J^{[p^e]})\leq  Bp^e$ for all $e \in \NN$, as desired. 

Now assume that $R$ is $F$-pure and $H^{t-1}_\m(R/J^{[p^e]}) \ne 0$ for some $e \in \NN$. Theorem \ref{Thm Limit a-inv} implies that $\lim \limits_{e \to \infty} \frac{a_{t-1}(R/J^{[p^e]})}{p^e}$ exists, and gives the lower bound for the limit. For the upper bound, note that we can set $B'=c^J(J)$, because $c^J(J) \geq \frac{\nu_J^J(p^e)}{p^e}$ for all $e \in \NN$ for $F$-pure rings. With this choice of $B'$, combining (\ref{ineq1}) and (\ref{ineq2}), we deduce that $a_{t-1}(R/J^{[p^e]}) \leq D(c^J(J)p^e+1) + Dp^e = D(c^J(J)+1)p^e + D$ for all $e \gg 0$. This gives
\[
\ds \lim_{e \to \infty} \frac{a_{t-1}(R/J^{[p^e]})}{p^e} \leq \lim_{e \to \infty} \frac{D(c^J(J)+1)p^e + D}{p^e} = D(c^J(J)+1).
\]
\end{proof}

%%%%%%%%%%%%%%%%%%%%%%%%%%%%%%%%%%%%%%%%%%
\section{The equality $\fpt(\m) = c^{\m}(\m)$ for  standard graded rings} \label{Sec_equality}
%%%%%%%%%%%%%%%%%%%%%%%%%%%%%%%%%%%%%%%%%%

In this section we prove that, for a Gorenstein standard graded algebra $(R,\m,K)$,  the equality between $\fpt(\m)$ and the so-called diagonal $F$-threshold $c^\m(\m)$ implies that $R$ is strongly $F$-regular. Throughout this section we assume that $(R,\m,K)$ is a standard graded ring. We start by making some observations about $a$-invariants for rings satisfying $\fpt(\m) = c^{\m}(\m)$.

\begin{proposition}\label{Prop equi frob a-inv}
Let $(R,\m,K)$ be a standard graded $d$-dimensional $K$-algebra that is $F$-finite and $F$-pure. Let $J\subseteq R$ be a homogeneous ideal, and let $t=\dim(R/J)$.
If $\fpt(\m)=c^\m(\m)$, then for all $e \in \NN$ we have
$$
\frac{a_t(R/J^{[p^e]})}{p^e}
= a_t(R/J)-a_d(R)+\frac{a_d(R)}{p^e}.$$
\end{proposition}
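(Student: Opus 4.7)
The plan is to pin down the limit $L := \lim_{e \to \infty} \frac{a_t(R/J^{[p^e]})}{p^e}$ first, and then sandwich each individual term from both sides by the claimed value. The hypothesis $\fpt(\m) = c^\m(\m)$, combined with the inequality $\fpt(\m) \leq -a_d(R) \leq c^\m(\m)$ from \cite{DSNBFpurity} mentioned in the introduction, forces $\fpt(\m) = -a_d(R) = c^\m(\m)$. Applying Theorem \ref{ThmRegDimR/J_t} to $J$, the lower and upper bounds $a_t(R/J) + \fpt(\m)$ and $a_t(R/J) + c^\m(\m)$ now coincide, so $L = a_t(R/J) - a_d(R)$.

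For the upper bound at each stage $s$, I would apply Lemma \ref{lemma ineq frob be} with parameters $s$ and $e$ and let $e \to \infty$. Since $\frac{b_\m(p^e)}{p^e} \to \fpt(\m) = -a_d(R)$ and the right-hand side converges to $L$, this yields $\frac{a_t(R/J^{[p^s]})}{p^s} + \frac{\fpt(\m)}{p^s} \leq L$, which rearranges to the required
\[
\frac{a_t(R/J^{[p^s]})}{p^s} \leq a_t(R/J) - a_d(R) + \frac{a_d(R)}{p^s}.
\]

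For the matching lower bound, which is the main obstacle, I would first specialize the above upper bound to the case $J = \m$ (so $t = 0$, $a_0(R/\m) = 0$, and $a_0(R/\m^{[p^e]}) = \nu^\m_\m(p^e)$) to obtain $\nu^\m_\m(p^e) \leq -a_d(R)(p^e - 1)$. Then Lemma \ref{lemma ineq frob nu}, applied with $J$ replaced by $J^{[p^s]}$ (noting that $\dim(R/J^{[p^s]}) = t$ and $(J^{[p^s]})^{[p^e]} = J^{[p^{e+s}]}$), yields
\[
\frac{a_t(R/J^{[p^{e+s}]})}{p^{e+s}} \leq \frac{a_t(R/J^{[p^s]})}{p^s} + \frac{\nu^\m_\m(p^e)}{p^{e+s}}.
\]
Substituting the bound on $\nu^\m_\m(p^e)$ and letting $e \to \infty$, the left side converges to $L = a_t(R/J) - a_d(R)$ while $\frac{\nu^\m_\m(p^e)}{p^{e+s}}$ is bounded above by a quantity converging to $-a_d(R)/p^s$. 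Rearranging produces the matching lower bound $\frac{a_t(R/J^{[p^s]})}{p^s} \geq a_t(R/J) - a_d(R) + \frac{a_d(R)}{p^s}$, and equality follows. The cleverness lies in using the hypothesis twice: once on $J$ to fix $L$, and once on $\m$ to extract the sharp bound on $\nu^\m_\m(p^e)$ that drives the reverse direction of Lemma \ref{lemma ineq frob nu}.
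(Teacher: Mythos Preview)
Your proof is correct and follows essentially the same route as the paper's: both combine Lemma~\ref{lemma ineq frob be} and Lemma~\ref{lemma ineq frob nu} (the latter applied with $J^{[p^s]}$ in place of $J$) and pass to the limit, the hypothesis collapsing $\fpt(\m)$ and $c^\m(\m)$ to $-a_d(R)$. The paper packages this a bit more cleanly by first recording that $\lim_{s}\frac{a_{\dim(R/\a)}(R/\a^{[p^s]})}{p^s}=a_{\dim(R/\a)}(R/\a)-a_d(R)$ for every homogeneous $\a$ and then specializing to $\a=J^{[p^e]}$; note also that your intermediate bound $\nu^\m_\m(p^e)\le -a_d(R)(p^e-1)$ is unnecessary, since $\nu^\m_\m(p^e)/p^e\to c^\m(\m)=-a_d(R)$ already does the job when you take the limit in Lemma~\ref{lemma ineq frob nu}.
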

\begin{proof}
In our assumptions, $\fpt(\m)=-a_d(R)=c^\m(\m)$ \cite[Theorem B]{DSNBFpurity}. From Lemmas \ref{lemma ineq frob be} and \ref{lemma ineq frob nu}, for all homogeneous ideals $\a \subseteq R$ we obtain that
\[
\lim\limits_{s\to\infty} \frac{a_{\dim(R/\a)}(R/\a^{[p^s]})}{p^s}
= a_{\dim(R/\a)}(R/\a)-a_d(R).
\]
In particular, choosing $\a=J^{[p^e]}$ and dividing by $p^e$, this implies that
$$
\frac{a_{t}(R/J^{[p^e]})-a_d(R)}{p^e}=
\lim\limits_{s\to\infty} \frac{a_{t}(R/J^{[p^{(e+s)}]})}{p^{(e+s)}}=
\lim\limits_{s\to\infty} \frac{a_{t}(R/J^{[p^{s}]})}{p^{s}}
= a_{t}(R/J)-a_d(R).
$$
Hence, for every $e \in \NN$, we have $\frac{a_{t}(R/J^{[p^e]})}{p^e}=a_{t}(R/J)-a_d(R)+\frac{a_d(R)}{p^e}$.
\end{proof}

\begin{remark}\label{Rem a-inv nu}
Suppose that $(R,\m,K)$ is a standard graded $K$-algebra. If $J$ is an $\m$-primary homogeneous ideal, then for all $e \in \NN$
$$\nu^J_\m(p^e)=\max\{s \in \NN \mid \m^s\not\subseteq J^{[p^e]}\}=\max\{ s \in \NN\mid \m^s (R/J^{[p^e]})\neq 0\}=a_0(R/J^{[p^e]}).
$$
\end{remark}

The following corollary gives a formula to compute $c^J(\m)$
in terms of certain $a$-invariants.

\begin{corollary}\label{Cor fpt=c formula}
Let $(R,\m,K)$ be a standard graded $d$-dimensional $K$-algebra that is $F$-finite and $F$-pure. Let $J\subseteq R$ be a homogeneous $\m$-primary ideal.
If $\fpt(\m)=c^\m(\m),$
then 
$$
\frac{ \nu^J_\m(p^e)}{p^e}
= a_0(R/J)-a_d(R)+\frac{a_d(R)}{p^e}.
$$
In particular,
$c^J(\m)= a_0(R/J)-a_d(R).$
\end{corollary}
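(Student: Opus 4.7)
The plan is to combine Remark \ref{Rem a-inv nu} with Proposition \ref{Prop equi frob a-inv} applied to the ideal $J$. Since $J$ is homogeneous and $\m$-primary, we have $t = \dim(R/J) = 0$, which is the key observation that makes both results directly applicable and compatible with each other.

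First, I would invoke Remark \ref{Rem a-inv nu}: since $J^{[p^e]}$ is itself $\m$-primary (as $J$ is), that remark (or the same argument applied to $J^{[p^e]}$ in place of $J$) gives
\[
\nu^J_\m(p^e) = a_0(R/J^{[p^e]})
\]
for every $e \in \NN$. Next, I would apply Proposition \ref{Prop equi frob a-inv} with the specialization $t = 0$, using the hypothesis $\fpt(\m) = c^\m(\m)$, to obtain
\[
\frac{a_0(R/J^{[p^e]})}{p^e} = a_0(R/J) - a_d(R) + \frac{a_d(R)}{p^e}.
\]
Substituting the first equality into the second yields the stated formula for $\nu^J_\m(p^e)/p^e$.

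For the ``in particular'' statement, I would simply take the limit as $e \to \infty$: by the definition of the $F$-threshold (whose existence is guaranteed by Theorem \ref{Existence c}), the left-hand side converges to $c^J(\m)$, while the term $a_d(R)/p^e$ on the right-hand side tends to $0$, giving $c^J(\m) = a_0(R/J) - a_d(R)$.

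There is no real obstacle here, since the corollary is essentially a specialization of Proposition \ref{Prop equi frob a-inv} to the case $\dim(R/J) = 0$, translated through the elementary identification of $\nu^J_\m(p^e)$ with $a_0(R/J^{[p^e]})$ that holds for $\m$-primary $J$.
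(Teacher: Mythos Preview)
Your proposal is correct and matches the paper's own proof essentially line for line: the paper simply says the corollary follows immediately from Proposition~\ref{Prop equi frob a-inv}, Remark~\ref{Rem a-inv nu}, and the fact that $\dim(R/J)=0$. Your added observation about taking the limit to obtain the ``in particular'' statement is the obvious (and intended) way to conclude.
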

\begin{proof}
This follows immediately from Proposition \ref{Prop equi frob a-inv}, Remark \ref{Rem a-inv nu}, and the fact that $\dim(R/J)=0$.
\end{proof}

We recall the definition of compatible ideals, which play an important role in showing that the equality $\fpt(\m) = c^{\m}(\m)$ implies that $R$ is a domain for Gorenstein rings. 

\begin{definition} [{\cite{KarlCentersFpurity}}] Let $(R,\m,K)$ be a reduced $F$-finite standard graded $K$-algebra. An ideal $J \subseteq R$ is said to be \emph{compatible} if $\varphi(F^e_*J) \subseteq J$ for all integers $e \geq 1$ and all $R$-homomorphisms $\varphi \in \Hom_R(F^e_*R,R)$.
\end{definition}

\begin{lemma}\label{Lemma Comp}
Let $(R,\m,K)$ be a $d$-dimensional standard graded Gorenstein $K$-algebra that is $F$-finite and $F$-pure. Let $J\subseteq R$ be a homogeneous compatible ideal, $\n=\m (R/J)$, and $t=\dim(R/J)$.
If $\fpt(\m)=c^\m(\m),$ then $\fpt(\m)=\fpt(\n)=c^\n (\n)=c^\m(\m)$. In particular, $a_d(R)=a_t(R/J).$
\end{lemma}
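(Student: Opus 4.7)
\bigskip

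The plan is to squeeze all four quantities between two copies of the same number by running the chain
\[
\fpt(\m)\ \leq\ \fpt(\n)\ \leq\ -a_t(R/J)\ \leq\ c^{\n}(\n)\ \leq\ c^{\m}(\m)\ =\ \fpt(\m),
\]
and then to read off $a_d(R)=a_t(R/J)$ from the equality $\fpt(\m)=-a_d(R)=-a_t(R/J)=\fpt(\n)$.

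First I would set up the endpoints. By Theorem~B of~\cite{DSNBFpurity}, the Gorenstein hypothesis gives $\fpt(\m)=-a_d(R)$, while the general inequality $-a_d(R)\leq c^{\m}(\m)$ together with the standing assumption $\fpt(\m)=c^{\m}(\m)$ forces $\fpt(\m)=-a_d(R)=c^{\m}(\m)$. This is the rightmost and leftmost value in the chain. The inner inequality $\fpt(\n)\leq -a_t(R/J)\leq c^{\n}(\n)$ is simply the same general inequality of~\cite{DSNBFpurity} applied to $R/J$, so I first need to observe that $R/J$ is $F$-finite and $F$-pure. The former is immediate from the $F$-finiteness of $R$; for the latter, one picks any splitting $\varphi\in\Hom_R(F_*^e R,R)$ of the Frobenius of $R$, and uses compatibility of $J$ to push it down to a map $\bar\varphi\in\Hom_{R/J}(F_*^e(R/J),R/J)$ satisfying $\bar\varphi(F_*^e\bar 1)=\overline{\varphi(F_*^e 1)}=\bar 1$, which gives an $F$-splitting of $R/J$.

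Next I would establish the two outer inequalities in the chain. For $\fpt(\m)\leq\fpt(\n)$, the key observation is again compatibility: if $f\in R$ is homogeneous with $f\notin I_e(R)$, choose $\psi:F_*^eR\to R$ with $\psi(F_*^e f)\notin\m$; since $\psi(F_*^e J)\subseteq J$ the induced $\bar\psi:F_*^e(R/J)\to R/J$ satisfies $\bar\psi(F_*^e\bar f)=\overline{\psi(F_*^e f)}\notin\n$, because a unit of $R$ maps to a unit in $R/J$. Hence $\bar f\notin I_e(R/J)$, so $b_\m(p^e)\leq b_\n(p^e)$, and dividing by $p^e$ and letting $e\to\infty$ yields $\fpt(\m)\leq\fpt(\n)$. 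For the other side, $c^{\n}(\n)\leq c^{\m}(\m)$ is even more elementary: since $\n^s=(\m^s+J)/J$ and $\n^{[p^e]}=(\m^{[p^e]}+J)/J$, the containment $\n^s\subseteq\n^{[p^e]}$ is equivalent to $\m^s\subseteq\m^{[p^e]}+J$, which is implied by $\m^s\subseteq\m^{[p^e]}$; contrapositively, $\nu^{\n}_{\n}(p^e)\leq \nu^{\m}_{\m}(p^e)$, and taking limits gives the desired comparison of $F$-thresholds.

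Plugging everything together, the full chain collapses to a single value, proving $\fpt(\m)=\fpt(\n)=c^{\n}(\n)=c^{\m}(\m)$ and also $\fpt(\n)=-a_t(R/J)$. Combining the two equalities $\fpt(\m)=-a_d(R)$ and $\fpt(\n)=-a_t(R/J)$ with $\fpt(\m)=\fpt(\n)$ yields $a_d(R)=a_t(R/J)$, completing the proof. I don't anticipate a genuine obstacle here; the most delicate step is verifying that the push-down $\bar\psi$ of a splitting is well defined and nonzero modulo $\n$, which hinges squarely on the definition of a compatible ideal and the fact that the splitting maps in question are $R$-linear maps out of $F_*^e R$, not just $K$-linear.
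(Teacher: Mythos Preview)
Your proof is correct and follows essentially the same squeezing argument as the paper: establish $c^{\n}(\n)\leq c^{\m}(\m)$ by the elementary containment $\m^r\subseteq\m^{[p^e]}\Rightarrow\n^r\subseteq\n^{[p^e]}$, use $\fpt(\m)\leq\fpt(\n)$ on the other end, and sandwich via the $a$-invariant inequality from \cite{DSNBFpurity}. The only difference is that the paper simply cites \cite[Theorem~4.7]{DSNBFpurity} for $\fpt(\m)\leq\fpt(\n)$, whereas you reprove it by pushing down a splitting along the compatible ideal to get $b_\m(p^e)\leq b_\n(p^e)$; this makes your argument slightly more self-contained but is otherwise the same content.
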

\begin{proof}
We note that $\m^r \subseteq \m^{[p^e]}$ implies that $\n^r=\m^r(R/J)\subseteq \m^{[p^e]} (R/J)=\n^{[p^e]}$. As a consequence, we have that $c^{\n}(\n)\leq c^{\m}(\m)$. We also know that $\fpt(\m)\leq \fpt(\n)$ \cite[Theorem 4.7]{DSNBFpurity}.
Since $\fpt(\m)=c^\m(\m)$, we obtain $\fpt(\m) = \fpt(\n)=c^\n (\n) =c^\m(\m)$.
The last statement follows from the fact that $\fpt(\m)\leq -a_d(R)\leq c^\m(\m)$ and $\fpt(\n)\leq -a_t(R/J)\leq c^\n(\n)$ 
\cite[Theorem B]{DSNBFpurity}.
\end{proof}

The following lemma is a key ingredient in the proof of Theorem \ref{main thm}. 
\begin{lemma} \label{Lemma domain}
Let $(R,\m,K)$ be a standard graded Gorenstein $K$-algebra that is $F$-finite and $F$-pure. If $\fpt(\m)=c^\m(\m),$ then $R$ is a domain.
\end{lemma}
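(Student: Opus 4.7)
The plan is to argue by contradiction. Suppose $R$ has minimal primes $P_1,\dots,P_s$ with $s\ge 2$; since $F$-purity forces $R$ to be reduced, the $P_i$ are homogeneous, and since Gorensteinness implies equidimensionality, $\dim(R/P_i)=d$ for every $i$. My first step will be to verify that each $P_i$ is a compatible ideal, so that Lemma \ref{Lemma Comp} applies. This is a standard fact for minimal primes of a reduced Noetherian ring: I would use prime avoidance to choose a homogeneous element $y_i\in \bigcap_{j\ne i}P_j\setminus P_i$, which satisfies $\operatorname{ann}(y_i)=P_i$ and $y_iP_i=0$. Then for any $\varphi\in\Hom_R(F^e_*R,R)$ and $z\in P_i$, the $R$-module structure on $F^e_*R$ gives
\[
y_i\cdot\varphi(F^e_*z)=\varphi(F^e_*(y_i^{p^e}z))=0,
\]
forcing $\varphi(F^e_*z)\in\operatorname{ann}(y_i)=P_i$.

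Applying Lemma \ref{Lemma Comp} to $J=P_i$ then yields $a_d(R/P_i)=a_d(R)$ for every $i$. I would next consider the canonical exact sequence
\[
0\longrightarrow R\longrightarrow\bigoplus_{i=1}^{s}R/P_i\longrightarrow C\longrightarrow 0,
\]
which is injective on the left because $\bigcap_iP_i=0$ in the reduced ring $R$. The cokernel $C$ becomes trivial after localizing at any minimal prime of $R$, so $\dim(C)<d$, and the long exact sequence in local cohomology produces a graded surjection
\[
H^d_\m(R)\twoheadrightarrow\bigoplus_{i=1}^{s}H^d_\m(R/P_i).
\]

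The final and most delicate step is the comparison of graded pieces at degree $a:=a_d(R)$. Because $R$ is standard graded Gorenstein, graded local duality gives $\dim_K[H^d_\m(R)]_a=1$. On the other hand, each equality $a_d(R/P_i)=a$ forces $[H^d_\m(R/P_i)]_a\neq 0$ by the definition of the $a$-invariant, so the right-hand side of the surjection has $K$-dimension at least $s$ in degree $a$. This forces $s\le 1$, contradicting $s\ge 2$; hence $R$ admits a unique minimal prime and, being reduced, is a domain.
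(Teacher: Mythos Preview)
Your argument is correct and follows essentially the same route as the paper: a contradiction via a graded surjection from $H^d_\m(R)$ onto a direct sum of top local cohomologies of quotients by compatible ideals, combined with Lemma~\ref{Lemma Comp} and the one-dimensionality of $[H^d_\m(R)]_{a_d(R)}$ from Gorensteinness. The only cosmetic differences are that the paper groups the minimal primes into two pieces $Q_1$ and $J=Q_2\cap\cdots\cap Q_\ell$ (using the Mayer--Vietoris sequence $0\to R\to R/Q_1\oplus R/J\to R/(Q_1+J)\to 0$) and cites \cite{KarlCentersFpurity} for compatibility, whereas you use the full $s$-fold decomposition and supply a direct annihilator argument for compatibility of minimal primes.
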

\begin{proof}
We proceed by way of contradiction. Let $Q_1,\ldots,Q_\ell$ be the minimal primes of $R$. Since  $R$ is not a domain, $\ell\geq 2.$ We set $J=Q_2\cap\ldots\cap Q_\ell$, and we note that $Q_1$ and $J$ are compatible ideals \cite[Corollary 4.8 \& Lemma 3.5]{KarlCentersFpurity}. Furthermore, we have $d:=\dim(R)=\dim(R/Q_1)=\dim(R/J),$ because $R$ is a standard graded Cohen-Macaulay $K$-algebra, hence it is equidimensional. In addition, since $R$ is $F$-pure, it is reduced. There is a short exact sequence
\[
0\to R\to R/Q_1\oplus R/J\to R/(Q_1+J)\to 0,
\]
which induces a long exact sequence on local cohomology
$$
\ldots\to H^{d-1}_{\m}(R/(Q_1+J))\to H^{d}_{\m}(R)\to H^{d}_{\m}(R/Q_1)\oplus H^{d}_{\m}(R/J)\to 0.
$$
We point out that $H^{d}_{\m}(R/(Q_1+J))=0$ because $\dim R/(Q_1+J)\leq d-1.$
Let $a=a_d(R).$
Then
$$\left[ H^{d}_{\m}(R)\right]_{a}\to \left[  H^{d}_{\m}(R/Q_1)\right]_{a}\oplus \left[ H^{d}_{\m}(R/J)\right]_{a}$$
is surjective, and thus
\begin{align*}
1&= \dim_K \left[ H^{d}_{\m}(R)\right]_{a}\hbox{ because }R\hbox{ is Gorenstain.} \\
&\geq \dim_K\left[  H^{d}_{\m}(R/Q_1)\right]_{a}+\dim_K\left[ H^{d}_{\m}(R/J)\right]_{a}\\
& \geq 2\hbox{ because }a=a_d(R/Q_1)=a_d(R/J)\hbox{ by Lemma \ref{Lemma Comp}}.
\end{align*}
Hence, we get a contradiction, and $R$ must be a domain.
\end{proof}

The following lemma allows us to reduce to the case of an infinite coefficient field. 
\begin{lemma} \label{lem: Inf residue field}
Let $(R,\m,K)$ be a standard graded $d$-dimensional Gorenstein $K$-algebra that is $F$-finite and $F$-pure. Let $\overline{K}$ be the algebraic closure of $K$ and $\overline{\m}$ be the irrelevant maximal ideal of the ring $R \otimes_K \overline{K}$. Then, $R\otimes_K \overline{K}$ is also a Gorenstein $F$-pure ring, $\fpt(\m)=\fpt(\overline{\m})$, and $c^{\m}(\m)=c^{\overline{\m}}(\overline{\m})$.  
\end{lemma}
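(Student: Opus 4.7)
The plan is to exploit the fact that $K \hookrightarrow \overline{K}$ is faithfully flat, so the induced extension $R \to R' := R \otimes_K \overline{K}$ is faithfully flat; moreover $R'$ is standard graded with irrelevant maximal ideal $\overline{\m} = \m R'$, of the same dimension $d$ as $R$. First I would verify that $R'$ is $F$-finite: since $\overline{K}$ is perfect one has $[F_*\overline{K}:\overline{K}]=1$, so the graded $F$-finite criterion recalled in the preliminaries applies. For the Gorenstein property, faithful flatness gives $H^i_{\overline{\m}}(R') \cong H^i_\m(R) \otimes_K \overline{K}$ for every $i$, so $R'$ is Cohen--Macaulay of dimension $d$, and the top local cohomology still has one-dimensional socle over $\overline{K}$; this shows $R'$ is Gorenstein with $a_d(R')=a_d(R)$.

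For $F$-purity, I would base-change a splitting of the Frobenius of $R$. The key identification needed is
\[
F^e_*R' \cong F^e_*R \otimes_K \overline{K}
\]
as $R'$-modules, which holds because $\overline{K}$ is perfect, so the Frobenius on $\overline{K}$ is a bijection and does not alter the underlying $\overline{K}$-module structure. Then any splitting $\varphi : F^e_*R \to R$ gives rise to $\varphi \otimes_K \id_{\overline{K}}: F^e_*R' \to R'$, which splits the Frobenius of $R'$. For the equality $c^\m(\m) = c^{\overline{\m}}(\overline{\m})$, note that $\overline{\m}^t = \m^t R'$ and $\overline{\m}^{[p^e]} = \m^{[p^e]} R'$, and faithful flatness means $\m^t \subseteq \m^{[p^e]}$ in $R$ if and only if $\overline{\m}^t \subseteq \overline{\m}^{[p^e]}$ in $R'$. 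Hence $\nu^\m_\m(p^e) = \nu^{\overline{\m}}_{\overline{\m}}(p^e)$ for every $e \in \NN$, and Theorem \ref{Existence c} yields the equality of the two $F$-thresholds in the limit.

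For the equality $\fpt(\m) = \fpt(\overline{\m})$, I would show that the non-splitting ideals for $R$ and $R'$ are related by $I_e(R') = I_e(R) \cdot R'$. Because $F^e_*R$ is finitely presented over $R$ (by $F$-finiteness), $\Hom$ commutes with the flat base change, and the identification above yields
\[
\Hom_{R'}(F^e_*R', R') \cong \Hom_R(F^e_*R, R) \otimes_K \overline{K}.
\]
Combined with faithful flatness and the definition of $I_e$, this gives $I_e(R') = I_e(R) R'$, whence $b_\m(p^e) = b_{\overline{\m}}(p^e)$ for every $e \in \NN$, and $\fpt(\m) = \fpt(\overline{\m})$ upon passing to the limit. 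The main obstacle is the careful justification of the isomorphism $F^e_*R' \cong F^e_*R \otimes_K \overline{K}$: the Frobenius on $R'$ raises both the $R$-component and the $\overline{K}$-component to the $p^e$-th power, so one must use the perfectness of $\overline{K}$ to absorb the twist on the $\overline{K}$-side via the inverse Frobenius automorphism; once this identification is secured, the remaining verifications are routine applications of faithful flatness.
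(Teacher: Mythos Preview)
Your argument is correct, but it diverges from the paper's proof at two points, and in both cases the paper takes a shorter path by leaning on hypotheses already in hand. For $F$-purity of $\overline{R}=R\otimes_K\overline{K}$, the paper does not base-change a splitting; instead it applies Fedder's Criterion (Theorem~\ref{Fedder}): writing $R=S/I$ with $S$ a polynomial ring, the colon $(I^{[p]}:_S I)$ and the non-containment in $\n^{[p]}$ are preserved under the faithfully flat extension $S\to S\otimes_K\overline{K}$, so $\overline{R}$ is $F$-pure without ever invoking the identification $F^e_*\overline{R}\cong F^e_*R\otimes_K\overline{K}$. For $\fpt(\m)=\fpt(\overline{\m})$, the paper uses the Gorenstein hypothesis directly: by \cite[Theorem~B]{DSNBFpurity} one has $\fpt(\m)=-a_d(R)$ and $\fpt(\overline{\m})=-a_d(\overline{R})$, and you already observed $a_d(R)=a_d(\overline{R})$, so the equality is immediate. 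Your route through $I_e(\overline{R})=I_e(R)\overline{R}$ is valid and in fact more general (it does not use Gorenstein), but the step from the $\Hom$ base-change isomorphism to this equality of ideals is not automatic: one must check that $M_e\otimes_R\overline{R}$ acquires no new free summands, which follows because the trace ideal of $M_e$ base-changes along the flat map and stays inside $\overline{\m}$. The treatment of $c^\m(\m)=c^{\overline{\m}}(\overline{\m})$ and of the Gorenstein property is essentially the same in both proofs.
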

\begin{proof}
The map $R \to R\otimes_K \overline{K}=: \overline{R}$ is faithfully flat, and the irrelevant maximal ideal $\m$ of $R$ extends to the irrelevant maximal ideal $\overline{\m}$ of $\overline{R}$ under such extension. It follows that $\overline{R}$ is Cohen-Macaulay of dimension $d$, and has the same type as $R$. Hence $\overline{R}$ is a Gorenstein standard graded $\overline{K}$-algebra. Moreover, we have that $\overline{R}$ is an $F$-pure ring as consequence of Fedder's Criterion, Theorem \ref{Fedder}, since colon ideals and non-containments are preserved under faithfully flat extensions. We also note that $a_d(R)=a_d(R\otimes_K \overline{K})$, because there is a graded isomorphism $H^d_\m(R) \otimes_K \overline{K} \cong H^d_{\overline{\m}}(R \otimes_K \overline{K})$. By these observations and the fact that  $R$ and $\overline{R}$ are Gorenstein standard graded $K$-algebras, we see that $\fpt(\m)=\fpt(\overline{\m})$, since in this case the $F$-pure thresholds coincide with the respective $a$-invariants \cite[Theorem B]{DSNBFpurity}. In addition, $\nu^\m_\m(p^e) = \nu_{\overline{\m}}^{\overline{\m}}(p^e)$ for all $e \in \NN$, therefore $c^\m(\m)=c^{\overline{\m}}(\overline{\m})$.
\end{proof}

Theorem \ref{main thm} establishes the strong $F$-regularity of $R$, as well as a lower bound of the $F$-signature. The $F$-signature is an invariant that measures how far a strongly $F$-regular ring is from being regular. We start by recalling the definition of a couple of invariants in positive characteristic. We restrict ourselves to the standard graded setting.

We now recall the definitions and concepts that are relevant towards presenting Theorem \ref{main thm}. We restrict ourselves to the standard graded setting, since this is the level of generality in which we work for the rest of the article. We refer the reader to \cite{HoHu1,HunekeSurvey} for more general definitions and statements.

\begin{definition}[\cite{HoHuStrong, AE}] Let $(R,\m,K)$ be an $F$-finite standard graded $K$-algebra. We say that $R$ is strongly $F$-regular if, for all homogeneous elements $c \ne 0$, there exists $e \gg 0$ such that $c \notin I_e$. Equivalently, $R$ is strongly $F$-regular if $\bigcap_{e \in \NN} I_e = (0)$.
\end{definition} 

We point out that there are several characterizations of strong $F$-regularity. 
The original definition given by Hochster and Huneke is in terms of existence of splitting maps. The definition we give is equivalent in view of Remark \ref{Rem_split_I_e}. 
For the purposes of this article, it is helpful to recall an equivalent formulation in terms of the (big) test ideal $\tau(R)$. Namely, a ring $R$ is strongly $F$-regular if and only if $\tau(R) = R$ \cite[Theorem 7.1 (5)]{LyuKaren}. 

Another characterization of strong $F$-regularity can be given in terms of the $F$-signature, which we now introduce formally in the graded setup.

\begin{definition}[{\cite{SmithVDB,HLMCM,TuckerFSig}}]
Let $(R,\m,K)$ be a $d$-dimensional $F$-finite standard graded $K$-algebra. \emph{The $F$-signature of $R$} is defined by
$$
s(R)=\lim\limits_{e\to\infty}\frac{\lambda(R/I_e)}{p^{ed}}.
$$
\end{definition}
One can show that $s(R)$ equals the $F$-signature $s(R_\m)$ of the local ring $R_\m$. In addition, in our assumptions, $s(R)$ also coincides with the global $F$-signature of $R$ 
\cite{GlobalFrobBetti}. The $F$-signature is an important invariant for rings of positive characteristic.  For example, $R$ is regular if and only if $s(R)=1$ \cite[Corollary 16]{HLMCM}, and $R$ is strongly $F$-regular if and only if $s(R)>0$ \cite[Theorem 0.2]{ALFSig}. See also \cite[Theorem B]{GlobalFinv} for a global version of these results.

\begin{definition}[{\cite{Monsky}}]
Let $(R,\m,K)$ be a $d$-dimensional standard graded $K$-algebra, and let $J$ be an $\m$-primary homogeneous ideal.
The Hilbert-Kunz multiplicity of $J$ is defined by 
$$
\e_{HK}(J)=\lim\limits_{e\to\infty}\frac{\lambda(R/J^{[p^e]})}{p^{ed}}.
$$
\end{definition} 
This invariant measures the singularities of a ring. For instance, $R$ is regular if and only if it is formally unmixed and $\e_{HK}(\m) = 1$ \cite{WY}. Furthermore, smaller values of $e_{HK}(\m)$ typically imply better properties of the ring \cite{BlickleEnescu,AELower_eHK}.

\begin{remark}[{\cite[Proof of Theorem 11]{HLMCM}}]\label{RemFsigGor}
If $(R,\m,K)$ is a Gorenstein graded algebra, $J$ is a homogeneous system of parameters, and $\a=(J:_R\m)$, we have that $s(R)=\e_{HK}(J)-\e_{HK}(\a).$
\end{remark}

\begin{definition}
Let $(R,\m,K)$ be a standard graded $K$-algebra of dimension $d$. We denote by $e(R)$ the Hilbert Samuel multiplicity of the irrelevant maximal ideal $\m$ in $R$, that is
\[
\ds e(R) = \lim_{n \to \infty} \frac{d!}{n^d} \lambda(R/\m^n)
\]
\end{definition}

When $(R,\m,K)$ is Cohen-Macaulay, $e(R) = e(J) = \lambda(R/J)$ for any homogeneous ideal $J$ that is a minimal reduction of $\m$.

We are now ready to present the main result of this section.  For this, we need the preparatory results obtained in this section and, as a crucial tool, we invoke a characterization of tight closure and integral closure for parameter ideals in terms of $F$-thresholds \cite[Section 3]{HMTW}.

\begin{theorem} \label{main thm}
Let $(R,\m,K)$ be a $d$-dimensional standard graded Gorenstein $K$-algebra that is $F$-finite and $F$-pure. If $\fpt(\m)=c^\m(\m)$, then $R$ is strongly $F$-regular. In addition,
$$s(R) \geq \frac{\e(R)}{d!}.$$
\end{theorem}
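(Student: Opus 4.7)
The plan is to establish the $F$-signature bound directly from Corollary~\ref{Cor fpt=c formula} via Gorenstein duality on the Frobenius power of a minimal reduction, and to deduce strong $F$-regularity as a consequence of $s(R)>0$. First, I would reduce to the case where $K$ is algebraically closed by Lemma~\ref{lem: Inf residue field}, so that in particular $K$ is infinite and one can pick a homogeneous system of parameters $J=(y_1,\ldots,y_d)$ of general linear forms, which is a minimal reduction of $\m$. Cohen--Macaulayness of $R$ then gives $\ehk(J)=\e(J)=\e(R)$, and Lemma~\ref{Lemma domain} ensures $R$ is a domain. Since $R/J$ is a zero-dimensional Gorenstein graded $K$-algebra with top degree $a_0(R/J)=a_d(R)+d$, its one-dimensional socle is generated by a homogeneous element $u$ of that degree, and $(J:_R\m)=J+(u)$. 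By Remark~\ref{RemFsigGor}, $s(R)=\e(R)-\ehk(J+(u))$, so via the short exact sequence
\[
0\longrightarrow R/(J^{[p^e]}:u^{p^e})\stackrel{\cdot\,u^{p^e}}{\longrightarrow} R/J^{[p^e]}\longrightarrow R/(J+(u))^{[p^e]}\longrightarrow 0,
\]
proving $s(R)\geq\e(R)/d!$ reduces to showing $\lim_{e\to\infty}\lambda(R/(J^{[p^e]}:u^{p^e}))/p^{ed}\geq\e(R)/d!$.

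The key step is to apply Corollary~\ref{Cor fpt=c formula} to the $\m$-primary ideal $J+(u)$. Since modding out the socle of $R/J$ lowers its top degree by one, $a_0(R/(J+(u)))=a_d(R)+d-1$, and the corollary yields the exact equality
\[
\nu^{J+(u)}_\m(p^e)=(d-1)p^e+a_d(R),
\]
equivalently $\m^{(d-1)p^e+a_d(R)+1}\subseteq J^{[p^e]}+(u^{p^e})$. Setting $A:=R/J^{[p^e]}$, which is a graded Gorenstein Artinian $K$-algebra with socle in degree $\sigma_e:=a_d(R)+dp^e$, this translates into $[A]_k\subseteq u^{p^e}A$ for every $k\geq(d-1)p^e+a_d(R)+1$. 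The two facts at the heart of the argument are then the Hilbert-function symmetry $h_A(k)=h_A(\sigma_e-k)$ for graded Gorenstein Artinian $A$, and the equality $[A]_j=[R]_j$ for $j<p^e$, which holds because $J^{[p^e]}$ is generated in degree $p^e$.

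Combining these observations, I would estimate
\[
\lambda(R/(J^{[p^e]}:u^{p^e}))=\lambda(u^{p^e}A)\;\geq\;\sum_{k=(d-1)p^e+a_d(R)+1}^{\sigma_e}h_A(k)\;=\;\sum_{j=0}^{p^e-1}h_R(j),
\]
where the middle equality pairs the two ranges of length $p^e$ via $k\mapsto\sigma_e-k$. The cumulative Hilbert function $\sum_{j=0}^{N}h_R(j)$ has leading term $\e(R)N^d/d!$ for a standard graded $K$-algebra of dimension $d$, so dividing by $p^{ed}$ and letting $e\to\infty$ produces $s(R)\geq\e(R)/d!$. Because $\e(R)\geq 1$, this forces $s(R)>0$, and $R$ is then strongly $F$-regular by the Aberbach--Leuschke criterion~\cite{ALFSig}. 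The crux and main obstacle is the exact identity for $\nu^{J+(u)}_\m(p^e)$: it is precisely this exactness, and not just the limit behavior of $F$-thresholds, that forces $u^{p^e}A$ to absorb the top $p^e$ graded pieces of $A$ and yields the required bound.
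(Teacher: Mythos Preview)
Your proof is correct and follows a genuinely different route from the paper's. The paper first proves strong $F$-regularity on its own: from Corollary~\ref{Cor fpt=c formula} it deduces $c^{\a}(J)\leq d-1$ for $\a=J:_R\m$, passes to the completion $\widehat{R_\m}$ (which is where Lemma~\ref{Lemma domain} is actually needed, to ensure this completion is a domain), and invokes the Huneke--Musta\c{t}\u{a}--Takagi--Watanabe criterion \cite[Corollary~3.2 and Theorem~3.3]{HMTW} to conclude that the parameter ideal $J\widehat{R_\m}$ is tightly closed; strong $F$-regularity then descends via test-ideal theory \cite{LyuKaren}. Only afterwards does the paper bound $s(R)$, using the containment $J^{(d-1)p^e+1}\subseteq \a^{[p^e]}$ and a filtration of $(J^{(d-1)p^e+1}+J^{[p^e]})/J^{[p^e]}$ by $\binom{p^e-1}{d}$ copies of $R/J$, indexed by monomials in the regular sequence generating $J$. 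You instead go straight for the $F$-signature bound: you extract the \emph{exact} value of $\nu^{\a}_\m(p^e)$ from Corollary~\ref{Cor fpt=c formula}, exploit the Hilbert-function symmetry of the Artinian Gorenstein ring $R/J^{[p^e]}$ together with the identification of its graded pieces below degree $p^e$ with those of $R$, and obtain $s(R)\geq \e(R)/d!$ as the limit of the partial Hilbert sum $\sum_{j<p^e}h_R(j)/p^{ed}$; strong $F$-regularity then falls out of $s(R)>0$ via Aberbach--Leuschke \cite{ALFSig}. Your route is more self-contained---it avoids tight closure, test ideals, and the passage to completion, and in particular renders Lemma~\ref{Lemma domain} unnecessary for your argument---while the paper's route has the feature of separating the qualitative conclusion from the quantitative one and of making the connection to the tight-closure characterizations in \cite{HMTW} explicit.
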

\begin{proof}

Let $\overline{K}$ be the algebraic closure of $K$.
If $R\otimes_K \overline{K}$ is strongly $F$-regular, then so is $R$ \cite[Corollary 3.8]{AEBaseChange}. Furthermore, since the closed fiber of the extension $R\to R\otimes_K \overline{K}$ is regular, we have $s(R)\geq s(R\otimes_K \overline{K})$ \cite[Theorem 5.4]{YaoObsFsig}. In light of Lemma \ref{lem: Inf residue field}, we can assume that $K = \overline{K}$ and, in particular, that $K$ is infinite. As a consequence, there exists a homogeneous ideal $J$, generated by a homogeneous system of parameters, such that $\overline{J}=\m$. If we let $\a=J:_R \m$, then $a_0(R/\a)\leq a_0(R/J)-1$, and we have 
\begin{align*}
c^\a(J)& = c^\a(\m)\\
&= a_0(R/\a)-a_d(R)\hbox{ by Corollary \ref{Cor fpt=c formula}.}\\
&\leq a_0(R/J)-a_d(R)-1\\
&=c^J(\m)-1.
\end{align*}

Let $T=\widehat{R_\m}.$ We note that $T$ is a domain because the associated graded ring $\gr_\m(T)=\oplus_i (\m T)^i/(\m T)^{i+1}$ is isomorphic to $R$, which is a domain. 
In this case, $c^\a (J)=c^{\a T}(JT)$ and 
$c^J(\m)=c^{JT}(\m T)$, because $\a,J,$ and $\m$ are $\m$-primary ideals.
Then,
$c^{\a T}(J T)=c^\a (J)\leq c^J(\m)-1=c^{JT}(\m T)-1\leq d-1,$ 
\cite[Theorem 3.3]{HMTW}, and it follows that  $\a T\not\subseteq (JT)^*$ \cite[Corollary 3.2]{HMTW}. Since $T$ is Gorenstein, this means that the socle of $JT$ does not intersect $(JT)^*$, and we conclude that $JT=(JT)^*$. In addition, for a Gorenstein ring, a parameter ideal being tightly closed is an equivalent condition to being strongly $F$-regular \cite{HH-MSMF}  (see also \cite{CraigBookTC}).
Finally, since the test ideal commutes with localization and completion for Gorenstein rings \cite[Theorem 7.1]{LyuKaren}, we have that $T=\tau(T)=\tau(R) T$. Because the test ideal is a homogeneous ideal \cite[Lemma 4.2]{FregEquiv}, we obtain that $\tau(R)=R$, hence $R$ is a strongly $F$-regular ring.

We now focus on proving the inequality involving the $F$-signature. With the same reductions and the same notation introduced in the first part of the proof, recall that $c^\a(J)\leq d-1$.
In addition, since $R$ is $F$-pure, note that $\frac{\nu^\a_J(p^e)}{p^e}\leq c^\a(J)$ for all $e \in \NN$.
Then, for all non-negative integers $e$, we have a series of containments
\[
J^{p^e(d-1)+1}\subseteq J^{p^ec^{\a}(J)+1}\subseteq J^{\nu^{\a}_J(p^e)+1}\subseteq \a^{[p^e]},
\]
and therefore we get $\lambda((J^{p^e(d-1)+1}+J^{[p^e]})/J^{[p^e]})\leq \lambda(\a^{[p^e]}/J^{[p^e]}).$
Consider the set 
\[
\cA_e=\{(\alpha_1,\ldots,\alpha_d) \in\NN^d\;|\; \alpha_i\leq p^e-1 \hbox{ for all }i\quad \& \quad \alpha_1+\ldots+\alpha_d\geq p^e(d-1)+1 \}.
\]
Since $R$ is Cohen-Macaulay and $J$ is a parameter ideal, $J$ is generated by a regular sequence $\underline{f}=f_1,\ldots, f_d.$ The monomials in $\underline{f}$ with exponents in $\cA_e$ induce a filtration  on $(J^{p^e(d-1)+1}+J^{[p^e]})/J^{[p^e]}$:
\[
\ds 0=N_0\subseteq N_1\subseteq \ldots \subseteq N_{|\cA_e|}=(J^{p^e(d-1)+1} + J^{[p^e]})/J^{[p^e]},
\]
with the property that $N_{i+1}/N_i\cong R/J$ for all $0 \leq i \leq |\cA_e|-1$. As a consequence, for all $e \in \NN$ we see that 
$
\lambda((J^{p^e(d-1)+1}+ J^{[p^e]})/J^{[p^e]})=\lambda(R/J)|\cA_e|$.
We note that the set 
\[
\ds \cB_e=\{(\beta_1,\ldots,\beta_d) \in\NN^d \mid \beta_1+\ldots+\beta_d\leq p^e-d-1 \}
\]
is in bijective correspondence with $\cA_e$ via the function $\varphi:\cA_e\to\cB_e$, given by $\varphi((\alpha_1,\ldots,\alpha_d))=(p^e-1-\alpha_1,\ldots,p^e-1-\alpha_d).$
Since $|\cA_e|=|\cB_e|=\binom{p^e-1}{d}$, we obtain the following relations:
\begin{align*}
s(R)&=\e_{HK}(\a)-\e_{HK}(J)\hbox{ by Remark \ref{RemFsigGor}}\\
&=\lim\limits_{e\to\infty}\frac{\lambda(R/\a^{[p^e]})}{p^{ed}}-\lim\limits_{e\to\infty}\frac{\lambda(R/J^{[p^e]})}{p^{ed}} \\
&=\lim\limits_{e\to\infty}\frac{\lambda(\a^{[p^e]}/J^{[p^e]})}{p^{ed}}\hbox{ because }J\subseteq \a\\
&\geq\lim\limits_{e\to\infty}\frac{\lambda((J^{p^e(d-1)+1}+ J^{[p^e]})/J^{[p^e]})}{p^{ed}}\\
&=\lim\limits_{e\to\infty}\frac{\lambda(R/J)|\cA_e|}{p^{ed}}\\
&= 
% \lambda(R/J)\lim\limits_{e\to\infty}\frac{ \binom{d+p^e-1}{d}}{p^{ed}}  \ 
 \lambda(R/J)\lim\limits_{e\to\infty}\frac{ \binom{p^e-1}{d}}{p^{ed}}\\
&= 
%\e(R)\lim\limits_{e\to\infty}\frac{ \binom{d+p^e-1}{d}}{p^{ed}} \
 e(R) \lim\limits_{e\to\infty}\frac{ \binom{p^e-1}{d}}{p^{ed}} \hbox{ because }\overline{J} = \m \\
% &= \e(R)\lim\limits_{e\to\infty}\frac{ \binom{d+p^e-1}{d}}{p^{ed}}\\
&= \e(R)\frac{1}{d!}\\
\end{align*}
\end{proof}

\begin{example} Let $K$ be a perfect field of prime characteristic $p$, $n \geq 2$ be an integer, and $S=K[x_{ij} \mid 1 \leq i,j \leq n]$. Consider the ideal $I_2(X) \subseteq X$ generated by the $2 \times 2$ minors of the matrix $X = (x_{ij})_{1 \leq i,j \leq n}$. The ring $R=S/I_2(X)$ is Gorenstein of dimension $d=2n-1$, and the $a$-invariant $a:=a_d(R)$ is equal to $-n$. Let $\m$ be the irrelevant maximal ideal of $R$. Since $R$ is Gorenstein, we have that $\fpt(\m) = -a=n$. The ring $R$ can be viewed as a Segre product $K[X_1,\ldots,X_n] \ \# \ K[Y_1,\ldots,Y_n]$ and then, by \cite[Example 6.2]{HWY}, we have that $c^\m(\m) = n$. Since $R$ is Gorenstein and $\fpt(\m) = c^\m(\m)$, Theorem \ref{main thm} shows that $R$ is strongly $F$-regular. Even though this was already known,  because $R$ is a free summand of a polynomial ring \cite[Theorem 3.1 (e)]{HoHuStrong}, Theorem \ref{main thm} gives an alternative proof.
\end{example}

\begin{remark}
We think that the inequality proved in Theorem \ref{main thm} may not provide very meaningful bounds for the Hilbert-Samuel multiplicity. In fact, there are known bounds for multiplicities for $F$-pure and $F$-rational rings  \cite{HW}, which are better in several examples. However, Theorem \ref{main thm} is helpful to find lower bounds for the $F$-signature of rings as shown in Example \ref{exVraciu}.
\end{remark}

Using recent results of Singh, Takagi, and Varbaro \cite{STV}, we can extend Theorem \ref{main thm} to a more general setting. For a standard graded $F$-pure normal ring $(R,\m,K)$, let $X=\Spec(R)$, and let $K_X$ be the canonical divisor on $X$. The {\it anti-canonical cover of $X$} is defined as $\cR=\bigoplus_{n \geq 0} \mathcal{O}_X(-n K_X)$, and it is a very important object of study. It is known to be Noetherian in certain cases, which include the class of $\QQ$-Gorenstein rings, semigroup rings, and determinantal rings. Motivated by recent results on the minimal model program \cite{MMP1,MMP2,HC}, it is expected that $\cR$ is Noetherian when $R$ is strongly $F$-regular (see also \cite[Theorem 92]{Kol10}).

\begin{corollary} \label{coroll anticancover} Let $(R,\m,K)$ be a $d$-dimensional normal standard graded $K$-algebra that is Cohen-Macaulay, $F$-finite and $F$-pure. Assume that the anti-canonical cover of $R$ is Noetherian. If $\fpt(\m)=c^\m(\m),$ then $R$ is Gorenstein and strongly $F$-regular. Furthermore, $s(R) \geq \frac{\e(R)}{d!}$.
\end{corollary}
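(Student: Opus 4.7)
The plan is to reduce Corollary \ref{coroll anticancover} to Theorem \ref{main thm} by passing to a Gorenstein cover constructed from the anti-canonical algebra $\cR = \bigoplus_{n \geq 0} \mathcal{O}_X(-nK_X)$. Since $\cR$ is Noetherian and $R$ is $F$-pure, the structural results of \cite{STV} produce, after an appropriate Veronese normalization, a standard graded Gorenstein $F$-pure $K$-algebra $S$ of dimension $d$ containing $R$ as a graded direct summand, with irrelevant maximal ideal $\m_S$ lying over $\m$.

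The first step is to reduce to the case where $K$ is algebraically closed, via the faithfully flat base change argument used in Lemma \ref{lem: Inf residue field}. The Noetherianity of the anti-canonical cover, $F$-purity, and the equality $\fpt(\m)=c^\m(\m)$ are all preserved under extension to $\overline{K}$, and the anti-canonical cover commutes with such base change.

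The crucial second step is to transfer the equality $\fpt(\m)=c^\m(\m)$ from $R$ to the cover $S$. Since $R\hookrightarrow S$ splits as a map of graded $R$-modules, one can compare $b_\m(p^e)$ and $\nu_\m^\m(p^e)$ for $R$ with the analogous sequences for $S$: the split inclusion ensures that any element witnessing $b_\m(p^e)$ on $R$ extends to a splitting map on $S$, and similarly for the colon-ideal description of $\nu^\m_\m(p^e)$. Combined with the general inequalities $\fpt \leq c^\m$ and the fact that both invariants are bounded by the $a$-invariant \cite[Theorem B]{DSNBFpurity}, this forces $\fpt(\m_S)=c^{\m_S}(\m_S)$ in the cover $S$.

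With this equality in hand, Theorem \ref{main thm} applies to $S$, giving that $S$ is strongly $F$-regular and that $s(S)\geq \e(S)/d!$. Strong $F$-regularity descends across split inclusions of rings, so $R$ is strongly $F$-regular. To conclude that $R$ is itself Gorenstein and that $s(R)\geq \e(R)/d!$, I would invoke the fact, coming from \cite{STV}, that the Veronese degree required to make the anti-canonical algebra Gorenstein equals the (local) anti-canonical index of $R$, and argue that the equality of thresholds $\fpt(\m)=c^\m(\m)$ forces this index to equal $1$. Under this identification $R=S$, the Hilbert-Samuel multiplicity $\e(R)$ agrees with $\e(S)$, completing the proof.

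The main obstacle is the transfer step and the final identification $R=S$: both require delicate use of the structural results in \cite{STV} to control how the grading and splitting data of the anti-canonical cover interact with the sequences $b_\m(p^e)$ and $\nu^\m_\m(p^e)$ that define $\fpt(\m)$ and $c^\m(\m)$.
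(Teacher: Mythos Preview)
Your approach is substantially more complicated than necessary, and the ``main obstacle'' you identify is indeed a genuine gap: the transfer of the equality $\fpt(\m)=c^{\m}(\m)$ to a Gorenstein cover $S$ is not justified, and the final identification $R=S$ (i.e., that the anti-canonical index is $1$) is essentially the conclusion you are trying to prove, namely that $R$ is quasi-Gorenstein. You have not explained why the equality of thresholds on $R$ should force the index to be $1$; this is precisely the nontrivial content that \cite{STV} supplies.

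The paper's proof bypasses the cover construction entirely. From the chain $\fpt(\m)\leq -a_d(R)\leq c^{\m}(\m)$ \cite[Theorem B]{DSNBFpurity}, the hypothesis $\fpt(\m)=c^{\m}(\m)$ immediately gives $\fpt(\m)=-a_d(R)$. One then invokes \cite[Theorem A]{STV} directly: for a normal $F$-pure standard graded algebra with Noetherian anti-canonical cover, the equality $\fpt(\m)=-a_d(R)$ forces $R$ to be quasi-Gorenstein. Since $R$ is already Cohen--Macaulay, it is Gorenstein, and Theorem \ref{main thm} applies to $R$ itself, yielding strong $F$-regularity and the $F$-signature bound with no descent argument needed. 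In short, the structural input from \cite{STV} is a direct criterion for quasi-Gorensteinness in terms of $\fpt$ and the $a$-invariant, not a cover-and-transfer mechanism.
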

\begin{proof}
The equality $\fpt(\m) = c^\m(\m)$ forces $\fpt(\m) = -a_d(R)$ \cite[Theorem B]{DSNBFpurity}, and the latter implies that $R$ is quasi-Gorenstein \cite[Theorem A]{STV}. Since $R$ is assumed to be Cohen-Macaulay, $R$ is Gorenstein. The rest of the Corollary now follows from Theorem \ref{main thm}.
\end{proof}

We present a class of standard graded Gorenstein rings $(R,\m,K)$ that satisfy the equality $\fpt(\m)=c^\m(\m).$ In addition, we use the Theorem \ref{main thm} to find a lower bound for the $F$-signature. This example is possible thanks to recent computations of top socle degrees for diagonal hypersurfaces \cite{VraciuSocleDeg}.
\begin{remark}\label{Vraciu}
Let $S=K[x_1,\ldots,x_n]$ be a polynomial over an $F$-finite field of positive characteristic. Let $f=x^b_1+\ldots+x^b_n$, where $b\in\NN$, and set $R=S/fS$, with maximal ideal $\m$. Suppose that $\min\{p,n\}>b$, and set $\kappa=\left\lfloor \frac{p}{b}\right\rfloor.$
If 
$\left\lceil
\frac{n\kappa-n}{2}
\right\rceil \frac{1}{p}
\geq 1,$
then $c^{\m}(\m)=n-b$ \cite[Theorem 4.2]{VraciuSocleDeg}.
\end{remark}
\begin{example} \label{exVraciu}
Let $S=K[x_1,\ldots,x_n]$ be a polynomial over an $F$-finite field of positive characteristic, and $\m=(x_1,\ldots,x_n)$. Let $f=x^b_1+\ldots+x^b_n$, where $b\in\NN$, and let $R=S/fS$. Suppose that $p\equiv 1$ mod $b$,  $p\geq 2(b+1)$, and $n\geq 4b.$
Let  $\kappa=\left\lfloor \frac{p}{b}\right\rfloor$, and note that $\kappa=\frac{p-1}{b}$.
We have that
$$
\left\lceil
\frac{n\kappa-n}{2}
\right\rceil \frac{1}{p}
\geq 
\frac{n\kappa-n}{2} \cdot \frac{1}{p}
=\frac{n}{2} \cdot \frac{\kappa-1}{p}
=\frac{n}{2} \cdot \frac{p-b-1}{pb} = \frac{n}{2} \cdot \left(\frac{1}{b} - \frac{b+1}{pb}\right)
\geq \frac{n}{2} \cdot \frac{1}{2b}
\geq 1.
$$

It follows that $c^\m(\m)=n-b$, by Remark \ref{Vraciu}.  In addition, the top $a$-invariant of $R$ is $a_{n-1}(R)=b-n$. In order to show that $\fpt(\m)=n-b,$ we only need to prove that $R$ is an $F$-pure ring \cite[Theorem B]{DSNBFpurity}, because $R$ is Gorenstein. By Fedder's Criterion, Theorem \ref{Fedder}, it suffices to show that $f^{p-1}\not\in\m^{[p]}$. We note that $x^{b\kappa}\cdots x^{b\kappa}_n=(x_1\cdots x_n)^{p-1}\not\in \m^{[p]}.$ Since $n\kappa \geq b\kappa =p-1,$ there exists $\gamma_1,\ldots,\gamma_n$ such that $0\leq \gamma_i\leq \kappa$ and $\gamma_1+\ldots+\gamma_n=p-1$. Therefore, $x^{b\gamma_1}\cdots x^{b\gamma_n}_n$ is a monomial appearing with nonzero coefficient in $f^{p-1}$, and it does not belong to $\m^{[p]}.$ Hence, $R$ is $F$-pure, and $\fpt(\m)=c^\m(\m)$. In addition, since $e(R) = b$, we conclude that $\frac{b}{(n-1)!}\leq s(R)$ by Theorem \ref{main thm}.
\end{example}

We conclude this article with  one question motivated by Theorem \ref{Main GorGraded2}. 
\begin{question} \label{quest1} 
let $(R,\m,K)$ be a standard graded $K$-algebra that is $F$-finite and $F$-pure.
Does the equality $\fpt(\m)=c^\m(\m)$ imply that $R$ is $F$-rational?
\end{question}

\section*{Acknowledgments} We thank Craig Huneke for helpful discussions. We also thank Daniel Hern{\'a}ndez for inspiring conversations, in particular regarding Theorem \ref{Main 1}. We thank the anonymous referee for suggesting several improvements to the contents and the exposition of this article.

\bibliographystyle{alpha}
\bibliography{References}

\newcommand{\noop}[1]{}
\begin{thebibliography}{HMTW08}

\bibitem[AE03]{AEBaseChange}
Ian~M. Aberbach and Florian Enescu.
\newblock Test ideals and base change problems in tight closure theory.
\newblock {\em Trans. Amer. Math. Soc.}, 355(2):619--636, 2003.

\bibitem[AE05]{AE}
Ian~M. Aberbach and Florian Enescu.
\newblock The structure of {F}-pure rings.
\newblock {\em Math. Z.}, 250(4):791--806, 2005.

\bibitem[AE08]{AELower_eHK}
Ian~M. Aberbach and Florian Enescu.
\newblock Lower bounds for {H}ilbert-{K}unz multiplicities in local rings of
  fixed dimension.
\newblock {\em Michigan Math. J.}, 57:1--16, 2008.
\newblock Special volume in honor of Melvin Hochster.

\bibitem[AL03]{ALFSig}
Ian~M. Aberbach and Graham~J. Leuschke.
\newblock The {$F$}-signature and strong {$F$}-regularity.
\newblock {\em Math. Res. Lett.}, 10(1):51--56, 2003.

\bibitem[BCHM10]{MMP1}
Caucher Birkar, Paolo Cascini, Christopher~D. Hacon, and James McKernan.
\newblock Existence of minimal models for varieties of log general type.
\newblock {\em J. Amer. Math. Soc.}, 23(2):405--468, 2010.

\bibitem[BE04]{BlickleEnescu}
Manuel Blickle and Florian Enescu.
\newblock On rings with small {H}ilbert-{K}unz multiplicity.
\newblock {\em Proc. Amer. Math. Soc.}, 132(9):2505--2509 (electronic), 2004.

\bibitem[BFS13]{BFS}
Ang{\'e}lica Benito, Eleonore Faber, and Karen~E. Smith.
\newblock Measuring singularities with {F}robenius: the basics.
\newblock In {\em Commutative algebra}, pages 57--97. Springer, New York, 2013.

\bibitem[CHT99]{CHT}
S.~Dale Cutkosky, J{\"u}rgen Herzog, and Ng{\^o}~Vi{\^e}t Trung.
\newblock Asymptotic behaviour of the {C}astelnuovo-{M}umford regularity.
\newblock {\em Compositio Math.}, 118(3):243--261, 1999.

\bibitem[DS13]{DaoSmirnov}
Hailong Dao and Ilya Smirnov.
\newblock On generalized {H}ilbert-{K}unz function and multiplicity.
\newblock {\em arXiv:1305.1833}, 2013.

\bibitem[DSNB]{DSNBFpurity}
Alessandro De~Stefani and Luis N{\'u}{\~n}ez-Betancourt.
\newblock {F}-thresholds of graded rings.
\newblock {\em Nagoya Mathematical Journal}.
\newblock To appear.

\bibitem[DSPY16a]{GlobalFrobBetti}
Alessandro De~Stefani, Thomas Polstra, and Yongwei Yao.
\newblock Global {F}robenius {B}etti numbers and {F}robenius {E}uler
  characteristic.
\newblock {\em preprint}, 2016.

\bibitem[DSPY16b]{GlobalFinv}
Alessandro De~Stefani, Thomas Polstra, and Yongwei Yao.
\newblock Globalizing {F}-invariants.
\newblock {\em arXiv:1608.08580}, 2016.

\bibitem[Fed83]{FedderFpurityFsing}
Richard Fedder.
\newblock {$F$}-purity and rational singularity.
\newblock {\em Trans. Amer. Math. Soc.}, 278(2):461--480, 1983.

\bibitem[FW89]{FW}
Richard Fedder and Keiichi Watanabe.
\newblock A characterization of {$F$}-regularity in terms of {$F$}-purity.
\newblock In {\em Commutative algebra ({B}erkeley, {CA}, 1987)}, volume~15 of
  {\em Math. Sci. Res. Inst. Publ.}, pages 227--245. Springer, New York, 1989.

\bibitem[GW78]{GW1}
Shiro Goto and Keiichi Watanabe.
\newblock On graded rings. {I}.
\newblock {\em J. Math. Soc. Japan}, 30(2):179--213, 1978.

\bibitem[Her12]{DanielMRL}
Daniel~J. Hern{\'a}ndez.
\newblock {$F$}-purity of hypersurfaces.
\newblock {\em Math. Res. Lett.}, 19(2):389--401, 2012.

\bibitem[HH89a]{HoHuStrong}
Melvin Hochster and Craig Huneke.
\newblock Tight closure and strong {$F$}-regularity.
\newblock {\em M\'em. Soc. Math. France (N.S.)}, (38):119--133, 1989.
\newblock Colloque en l'honneur de Pierre Samuel (Orsay, 1987).

\bibitem[HH89b]{HH-MSMF}
Melvin Hochster and Craig Huneke.
\newblock Tight closure and strong {$F$}-regularity.
\newblock {\em M\'em. Soc. Math. France (N.S.)}, (38):119--133, 1989.
\newblock Colloque en l'honneur de Pierre Samuel (Orsay, 1987).

\bibitem[HH90]{HoHu1}
Melvin Hochster and Craig Huneke.
\newblock Tight closure, invariant theory, and the {B}rian\c con-{S}koda
  theorem.
\newblock {\em J. Amer. Math. Soc.}, 3(1):31--116, 1990.

\bibitem[HH94a]{HoHu2}
Melvin Hochster and Craig Huneke.
\newblock {$F$}-regularity, test elements, and smooth base change.
\newblock {\em Trans. Amer. Math. Soc.}, 346(1):1--62, 1994.

\bibitem[HH94b]{HoHu3}
Melvin Hochster and Craig Huneke.
\newblock Tight closure of parameter ideals and splitting in module-finite
  extensions.
\newblock {\em J. Algebraic Geom.}, 3(4):599--670, 1994.

\bibitem[HH00]{HHLC}
Melvin Hochster and Craig Huneke.
\newblock Localization and test exponents for tight closure.
\newblock {\em Michigan Math. J.}, 48:305--329, 2000.
\newblock Dedicated to William Fulton on the occasion of his 60th birthday.

\bibitem[Hir09]{Hirose}
Daisuke Hirose.
\newblock Formulas of {F}-thresholds and {F}-jumping coefficients on toric
  rings.
\newblock {\em Kodai Math. J.}, 32(2):238--255, 2009.

\bibitem[HL02]{HLMCM}
Craig Huneke and Graham~J. Leuschke.
\newblock Two theorems about maximal {C}ohen-{M}acaulay modules.
\newblock {\em Math. Ann.}, 324(2):391--404, 2002.

\bibitem[HM10]{MMP2}
Christopher~D. Hacon and James McKernan.
\newblock Existence of minimal models for varieties of log general type. {II}.
\newblock {\em J. Amer. Math. Soc.}, 23(2):469--490, 2010.

\bibitem[HMTW08]{HMTW}
Craig Huneke, Mircea Musta{\c{t}}{\u{a}}, Shunsuke Takagi, and Kei-ichi
  Watanabe.
\newblock F-thresholds, tight closure, integral closure, and multiplicity
  bounds.
\newblock {\em Michigan Math. J.}, 57:463--483, 2008.
\newblock Special volume in honor of Melvin Hochster.

\bibitem[HR76]{HRFpurity}
Melvin Hochster and Joel~L. Roberts.
\newblock The purity of the {F}robenius and local cohomology.
\newblock {\em Advances in Math.}, 21(2):117--172, 1976.

\bibitem[HTW11]{HTW}
Craig Huneke, Shunsuke Takagi, and Kei-ichi Watanabe.
\newblock Multiplicity bounds in graded rings.
\newblock {\em Kyoto J. Math.}, 51(1):127--147, 2011.

\bibitem[Hun96]{CraigBookTC}
Craig Huneke.
\newblock {\em Tight closure and its applications}, volume~88 of {\em CBMS
  Regional Conference Series in Mathematics}.
\newblock Published for the Conference Board of the Mathematical Sciences,
  Washington, DC; by the American Mathematical Society, Providence, RI, 1996.
\newblock With an appendix by Melvin Hochster.

\bibitem[Hun00]{HunekeLC}
Craig Huneke.
\newblock The saturation of {F}robenius powers of ideals.
\newblock {\em Comm. Algebra}, 28(12):5563--5572, 2000.
\newblock Special issue in honor of Robin Hartshorne.

\bibitem[Hun13]{HunekeSurvey}
Craig Huneke.
\newblock Hilbert-{K}unz multiplicity and the {F}-signature.
\newblock In {\em Commutative algebra}, pages 485--525. Springer, New York,
  2013.

\bibitem[HW15]{HW}
Craig Huneke and Kei-ichi Watanabe.
\newblock Upper bound of multiplicity of {F}-pure rings.
\newblock {\em Proc. Amer. Math. Soc.}, 143(12):5021--5026, 2015.

\bibitem[HWY14]{HWY}
Daisuke Hirose, Kei-ichi Watanabe, and Ken-ichi Yoshida.
\newblock {$F$}-thresholds versus {$a$}-invariants for standard graded toric
  rings.
\newblock {\em Comm. Algebra}, 42(6):2704--2720, 2014.

\bibitem[HX15]{HC}
Christopher~D. Hacon and Chenyang Xu.
\newblock On the three dimensional minimal model program in positive
  characteristic.
\newblock {\em J. Amer. Math. Soc.}, 28(3):711--744, 2015.

\bibitem[HY03]{H-Y}
Nobuo Hara and Ken-Ichi Yoshida.
\newblock A generalization of tight closure and multiplier ideals.
\newblock {\em Trans. Amer. Math. Soc.}, 355(8):3143--3174 (electronic), 2003.

\bibitem[Kat98]{KatzmanComplexityFrob}
Mordechai Katzman.
\newblock The complexity of {F}robenius powers of ideals.
\newblock {\em J. Algebra}, 203(1):211--225, 1998.

\bibitem[Kod00]{Kodiyalam}
Vijay Kodiyalam.
\newblock Asymptotic behaviour of {C}astelnuovo-{M}umford regularity.
\newblock {\em Proc. Amer. Math. Soc.}, 128(2):407--411, 2000.

\bibitem[Kol10]{Kol10}
J{\'a}nos Koll{\'a}r.
\newblock Exercises in the birational geometry of algebraic varieties.
\newblock In {\em Analytic and algebraic geometry}, volume~17 of {\em IAS/Park
  City Math. Ser.}, pages 495--524. Amer. Math. Soc., Providence, RI, 2010.

\bibitem[KSSZ14]{KSSZ}
Mordechai Katzman, Karl Schwede, Anurag~K. Singh, and Wenliang Zhang.
\newblock Rings of {F}robenius operators.
\newblock {\em Math. Proc. Cambridge Philos. Soc.}, 157(1):151--167, 2014.

\bibitem[Kun76]{F-finExc}
Ernst Kunz.
\newblock On {N}oetherian rings of characteristic {$p$}.
\newblock {\em Amer. J. Math.}, 98(4):999--1013, 1976.

\bibitem[KZ14]{KatzmanZhang}
Mordechai Katzman and Wenliang Zhang.
\newblock Castelnuovo-{M}umford regularity and the discreteness of
  {$F$}-jumping coefficients in graded rings.
\newblock {\em Trans. Amer. Math. Soc.}, 366(7):3519--3533, 2014.

\bibitem[Li13]{LiSocles}
Jinjia Li.
\newblock Asymptotic behavior of the socle of {F}robenius powers.
\newblock {\em Illinois J. Math.}, 57(2):603--627, 2013.

\bibitem[LS99]{FregEquiv}
Gennady Lyubeznik and Karen~E. Smith.
\newblock Strong and weak {$F$}-regularity are equivalent for graded rings.
\newblock {\em Amer. J. Math.}, 121(6):1279--1290, 1999.

\bibitem[LS01]{LyuKaren}
Gennady Lyubeznik and Karen~E. Smith.
\newblock On the commutation of the test ideal with localization and
  completion.
\newblock {\em Trans. Amer. Math. Soc.}, 353(8):3149--3180 (electronic), 2001.

\bibitem[Mon83]{Monsky}
P.~Monsky.
\newblock The {H}ilbert-{K}unz function.
\newblock {\em Math. Ann.}, 263(1):43--49, 1983.

\bibitem[MOY10]{DiagF-thresholdsBinHyp}
Kazunori Matsuda, Masahiro Ohtani, and Ken-ichi Yoshida.
\newblock Diagonal {$F$}-thresholds on binomial hypersurfaces.
\newblock {\em Comm. Algebra}, 38(8):2992--3013, 2010.

\bibitem[MTW05]{MTW}
Mircea Musta{\c{t}}{\v{a}}, Shunsuke Takagi, and Kei-ichi Watanabe.
\newblock F-thresholds and {B}ernstein-{S}ato polynomials.
\newblock pages 341--364, 2005.

\bibitem[Sch10]{KarlCentersFpurity}
Karl Schwede.
\newblock Centers of {$F$}-purity.
\newblock {\em Math. Z.}, 265(3):687--714, 2010.

\bibitem[STV]{STV}
Anurag~K. Singh, Shunsuke Takagi, and Matteo Varbaro.
\newblock A {G}orenstein criterion for strongly {$F$}-regular and log terminal
  singularities.
\newblock {\em International Mathematics Research Notices}.
\newblock To appear.

\bibitem[SVdB97]{SmithVDB}
Karen~E. Smith and Michel Van~den Bergh.
\newblock Simplicity of rings of differential operators in prime
  characteristic.
\newblock {\em Proc. London Math. Soc. (3)}, 75(1):32--62, 1997.

\bibitem[Tuc12]{TuckerFSig}
Kevin Tucker.
\newblock {$F$}-signature exists.
\newblock {\em Invent. Math.}, 190(3):743--765, 2012.

\bibitem[TW04]{TW2004}
Shunsuke Takagi and Kei-ichi Watanabe.
\newblock On {F}-pure thresholds.
\newblock {\em J. Algebra}, 282(1):278--297, 2004.

\bibitem[TW05]{TrungWang}
Ng{\^o}~Vi{\^e}t Trung and Hsin-Ju Wang.
\newblock On the asymptotic linearity of {C}astelnuovo-{M}umford regularity.
\newblock {\em J. Pure Appl. Algebra}, 201(1-3):42--48, 2005.

\bibitem[Vra15a]{VraciugHK}
Adela Vraciu.
\newblock An observation on generalized {H}ilbert-{K}unz functions.
\newblock {\em arXiv:1510.00668}, 2015.

\bibitem[Vra15b]{VraciuSocleDeg}
Adela Vraciu.
\newblock On the degrees of relations on
  {$x_1^{d_1},\ldots,x_n^{d_n},(x_1+\cdots+x_n)^{d_{n+1}}$} in positive
  characteristic.
\newblock {\em J. Algebra}, 423:916--949, 2015.

\bibitem[WY00]{WY}
Kei-ichi Watanabe and Ken-ichi Yoshida.
\newblock Hilbert-{K}unz multiplicity and an inequality between multiplicity
  and colength.
\newblock {\em J. Algebra}, 230(1):295--317, 2000.

\bibitem[Yao06]{YaoObsFsig}
Yongwei Yao.
\newblock Observations on the {$F$}-signature of local rings of characteristic
  {$p$}.
\newblock {\em J. Algebra}, 299(1):198--218, 2006.

\bibitem[Zha15]{ZhangRegFrob}
Wenliang Zhang.
\newblock A note on the growth of regularity with respect to frobenius.
\newblock 2015.

\end{thebibliography}

\end{document}